\documentclass[12pt]{amsart}
\usepackage[utf8]{inputenc}
\usepackage[T1]{fontenc}
\usepackage{lmodern}
\usepackage{color,graphicx,array, amssymb, amscd,slashed,bm}
\usepackage{comment}
\usepackage{hyperref}
\usepackage{graphicx}
\usepackage{subcaption}
\usepackage{mathrsfs}
\newtheorem{Theorem}{Theorem}[section]
\newtheorem{Lemma}[Theorem]{Lemma}
\newtheorem{Proposition}[Theorem]{Proposition}
\newtheorem{Corollary}[Theorem]{Corollary}
\theoremstyle{definition}
\newtheorem{Definition}[Theorem]{Definition}

\newtheorem{Remark}[Theorem]{Remark} 

\numberwithin{equation}{section}
\newcommand{\R}{\mathbb R}

\newcommand{\id}{\operatorname{Id}}

\usepackage{amsmath}	
\usepackage{cases}
\begin{document}

\title[]{$\mathbb{R}$-equivariant minimal Lagrangian surfaces in the complex quadric $Q_{2}$ and the complex hyperbolic quadric $Q_{2}^{*}$} 
 \author[S.~Zeng]{Sihao Zeng}
 \address{Department of Mathematics, Hokkaido University, 
 Sapporo, 060-0810, Japan}
 \email{sihao.zeng.f1@elms.hokudai.ac.jp}
 \thanks{The author is supported by Special Program: International Graduate Course for Data-driven and Hypothesis-driven Science (IGC-DHS)}

 \subjclass[2020]{Primary 53C42; Secondary 53D12}
 \keywords{Minimal surfaces; Lagrangian surfaces; complex quadric; 
 complex hyperbolic quadric.}
 \date{\today}
\pagestyle{plain}
\begin{abstract}
We study global and variational properties of
$\mathbb R$-equivariant minimal Lagrangian surfaces in the complex
quadric $Q_2$ and the complex hyperbolic quadric $Q_2^*$ arising from
loop group potentials. In the $Q_2^*$ case, we work under the
timelike-axis condition throughout the associated $\mathbb{S}^1$-family.
Excluding the flat case in $Q_2$, we give a necessary and
sufficient criterion for closing in the $y$-direction: 
a member $f^{\lambda_0}$ closes if and only if the
spectral ratio $\mu_2/\mu_1$ is a positive rational number.
The closing parameters form a dense countable subset of $\mathbb{S}^1$,
while the non-closing parameters are dense and have full arc-length
measure. The closing members yield cylindrical quotients in $Q_2$
and annular quotients in $Q_2^*$. Apart from the flat case in $Q_2$, the
surfaces under consideration have infinite absolute total curvature.
The equivariant surfaces considered here in $Q_2$ are unstable,
whereas minimal Lagrangian surfaces in $Q_2^*$ are stable. Finally, for compact cylindrical domains of the $y$-closing surfaces in $Q_2$, we obtain explicit sufficient width conditions for Hamiltonian
stability and instability under variations fixed near the boundary. We also establish the existence of a finite geometric critical width and determine it explicitly in the flat case.
\end{abstract}

\dedicatory{Dedicated to Professor Jun-ichi Inoguchi\\
on the occasion of his sixtieth birthday}

\maketitle

\section*{Introduction}

Minimal Lagrangian surfaces provide a natural link between
symplectic geometry, minimal surface theory, and integrable systems.
The complex quadric \(Q_2\) and the complex hyperbolic quadric
\(Q_2^*\) form a compact--noncompact pair \cite{CU,GVWX,WV2021}:
\[
    Q_2\cong \mathbb{S}^2\times \mathbb{S}^2,
    \qquad
    Q_2^*\cong \mathbb{H}^2\times \mathbb{H}^2.
\]
Although the surface theories in these two spaces admit parallel
loop group descriptions, their global and variational properties are
strongly influenced by the opposite signs of the ambient Ricci
curvature.

Related loop group constructions for minimal Lagrangian surfaces
in $\mathbb{C}P^2$ were developed by Dorfmeister and Ma in the
contractible case \cite{DM2021}.
They subsequently extended this framework to general Riemann
surfaces and constructed a class of minimal Lagrangian cylinders
by perturbing equivariant potentials \cite{DM2025}.

In earlier work \cite{KZ2026,KZ2026agag}, minimal Lagrangian surfaces in \(Q_2\) and \(Q_2^*\)
were constructed by the loop group method, and associated
\(\mathbb{S}^1\)-families of isometric deformations were obtained. The purpose of the present paper is to investigate the global and variational properties of the $\mathbb R$-equivariant surfaces arising
from these constructions. Our main results concern closing members of
the associated families, absolute total curvature, ordinary stability,
and explicit width conditions for Hamiltonian stability and instability
on compact cylindrical domains of the $y$-closing surfaces in $Q_2$.

In the $Q_2^*$ case, we restrict to the geometrically natural
regime in which the two linear equivariance actions associated
with the $\mathbb H^2$-factors have timelike axes throughout
the associated $\mathbb{S}^1$-family, see Remark~\ref{Rmk: timelike axes} for details. This preserves their causal type along the family and yields the real
elliptic-function description used in the global analysis below.

Our first main result concerns the closing problem. 
In earlier work \cite{KZ2026, KZ2026agag}, we provided sufficient
closing conditions for a prescribed period. Here we prove
necessity as well, determine the least positive closing period,
and describe the closing parameters throughout the associated
family. Write $\{f^\lambda\}_{\lambda\in \mathbb{S}^1}$ for the associated family, and let
$\mu_1(\lambda),\mu_2(\lambda)>0$ denote the two spectral quantities
defined in Section~\ref{Equivariant setup}. Under the nondegeneracy assumption in the $Q_2$
case and the standing timelike-axis assumption in the $Q_2^*$ case,
we prove that, for every $\lambda_0\in \mathbb{S}^1$,
\[
    f^{\lambda_0}\text{ closes in the $y$-direction}
    \quad\Longleftrightarrow\quad
    \frac{\mu_2(\lambda_0)}{\mu_1(\lambda_0)}
    \in\mathbb Q_{>0}.
\]
The closing parameters form a dense countable subset of $\mathbb{S}^1$,
and hence have zero arc-length measure, whereas the non-closing
parameters are dense and have full arc-length measure. The closing
members descend to cylindrical quotients in $Q_2$ and annular
quotients in $Q_2^*$.

We next study the global geometry of the induced metrics. In the
$Q_2$ case, the equivariant surfaces, as well as their closing
quotients, are complete. Moreover, apart from the flat case in $Q_2$ and under the standing assumption in the
$Q_2^*$ case, we prove that
\[
    \int_M |K|\,dA=+\infty.
\]
The mechanisms responsible for this divergence are different in the two spaces: in $Q_2$ it follows from
the nonconstant periodic behavior of the metric, whereas in $Q_2^*$
it is caused by the behavior of the conformal factor near the ends
of the maximal equivariant domain.

The two ambient spaces also exhibit opposite ordinary stability behavior. Here, stability is understood with respect to compactly supported normal variations:
\[
\begin{array}{ll}
Q_2:
&
\text{every $\mathbb R$-equivariant minimal Lagrangian surface
considered here is unstable},
\\[1mm]
Q_2^*:
&
\text{every minimal Lagrangian surface is stable}.
\end{array}
\]
The $Q_2^*$-statement follows from the general stability theorem for minimal
Lagrangian submanifolds in K\"ahler manifolds with non-positive Ricci curvature \cite{CLN2013,Le1989,Oh1990}, whereas the $Q_2$-statement follows
from completeness and the conformal-type obstruction of Miyaoka and
Ueki \cite{MU2014}. The former does not require the timelike-axis
assumption.

Finally, we study Hamiltonian stability and instability on the compact
cylindrical domains $\Omega_{\ell,L}$ of the $y$-closing
surfaces in $Q_2$ described in Section~\ref{sec: Hamiltonian stability}.
We consider Hamiltonian variations fixed in a neighbourhood of
the boundary. For a fixed left endpoint $x_1\in\mathbb R$ and a
closing period $L>0$, write
$\Omega_{\ell,L}=[0,\ell]\times(\mathbb R/L\mathbb Z)$ in the
translated coordinate $s=x-x_1$. Here $\ell>0$ is the width in
the specified conformal coordinate. Set $S:=a^2+b^2+c^2$,
where $a,b\in\mathbb R^*$ and $c\in\mathbb R$ are the parameters
of the equivariant potential. More precisely, Theorem~\ref{thm:direct-hamiltonian-widths} gives
\[
    0<\ell\leq\frac{\pi}{2\sqrt{2S}}
    \quad\Longrightarrow\quad
    \Omega_{\ell,L}\text{ is Hamiltonian stable},
\]
whereas
\[
    \ell>\frac{\pi}{4\sqrt{|ab|}}
    \quad\Longrightarrow\quad
    \Omega_{\ell,L}\text{ is Hamiltonian unstable}.
\]
The proof uses the Hamiltonian second variation and the sharp
Poincar\'e inequality on an interval, together with its Rayleigh
variational characterization. For each fixed immersion, left endpoint $x_1$ and closing period $L$,
Corollary~\ref{cor:geometric-critical-width} establishes a unique
finite geometric critical width $\ell_H=\ell_H(x_1,L)>0$ such that
\[
    \Omega_{\ell,L}\text{ is Hamiltonian stable}
    \quad\Longleftrightarrow\quad
    0<\ell\leq\ell_H,
\]
with
\[
    \frac{\pi}{2\sqrt{2S}}
    \leq\ell_H\leq
    \frac{\pi}{4\sqrt{|ab|}}.
\]
These bounds are uniform in $x_1$ and $L$, but do not determine
$\ell_H$ explicitly in general. In the flat case $c=0$ and
$|a|=|b|$, the bounds coincide and give
$\ell_H=\pi/(4|a|)$.

The paper is organized as follows. Section~\ref{Equivariant setup} recalls the equivariant
setup and the monodromy action used in the closing problem. Section~\ref{sc: R equivariant} studies the associated families and establishes the necessary and sufficient closing criterion together with the distribution of the closing parameters. Section~\ref{Absolute total curvature and stability} treats absolute total curvature and ordinary stability. Section~\ref{sec: Hamiltonian stability} establishes explicit width conditions for Hamiltonian stability and instability of compact cylindrical domains, proves the existence of a geometric critical width, and determines it explicitly in the flat case.

{\bf Acknowledgments:}
The author would like to express his sincere gratitude to his supervisor Prof. Shimpei Kobayashi for inspiring his interest in this problem and for many helpful discussions.

\section{Equivariant setup}\label{Equivariant setup}
We recall only the parts of the loop group construction that will be
used in the sequel. Define
 \[
\epsilon =
\begin{cases}
1, & \text{if the ambient space is } Q_{2},\\
-1, & \text{if the ambient space is } Q_{2}^{*}.
\end{cases}
\]
We use the product metric normalizations for which each $\mathbb{S}^2$-factor
of $Q_2\cong \mathbb{S}^2\times \mathbb{S}^2$ has Gaussian curvature $4$, while each
$\mathbb{H}^2$-factor of $Q_2^*\cong \mathbb{H}^2\times \mathbb{H}^2$ has Gaussian curvature
$-4$. Consequently,
\[
    \operatorname{Ric}_{Q_2}=4g_{Q_2},
    \qquad
    \operatorname{Ric}_{Q_2^*}=-4g_{Q_2^*}.
\]

As in the constructions of \cite{KZ2026,KZ2026agag}, the associated family is described by a pair of extended frames $(F_{\lambda}, F_{-\epsilon i\lambda})$. Set 
 \begin{equation}\label{eq:XY-associated-family}
     X^{\lambda} = (X^{\lambda}_{ij}) := F_{\lambda}F_{-\epsilon i\lambda}^{-1} , \quad Y^{\lambda} = (Y^{\lambda}_{ij}) := i F_{\lambda} \sigma_{3} F_{-\epsilon i\lambda}^{-1}, \quad \sigma_{3} = \begin{pmatrix} 1 & 0 \\
 0 & -1
\end{pmatrix}.
 \end{equation} 
Then the associated family $\{f^{\lambda} \}$ of a minimal Lagrangian surface is recovered as
\begin{equation}\label{eq:reconstruction-f-lambda}
    f^{\lambda}
    =
    \left[
    \begin{array}{c}
        \operatorname{Re}(X_{11}^{\lambda})
        + i\operatorname{Re}(Y_{11}^{\lambda})
        \\[1mm]
        \operatorname{Im}(X_{11}^{\lambda})
        + i\operatorname{Im}(Y_{11}^{\lambda})
        \\[1mm]
        \operatorname{Re}(X_{kl}^{\lambda})
        + i\operatorname{Re}(Y_{kl}^{\lambda})
        \\[1mm]
        \operatorname{Im}(X_{kl}^{\lambda})
        + i\operatorname{Im}(Y_{kl}^{\lambda})
    \end{array}
    \right],
\end{equation}
where $(k, l) = (1,2)$ in $Q_{2}$ and $(k, l) = (2,1)$ in $Q_{2}^{*}$. 
Here the brackets denote the corresponding projective class in $\mathbb{CP}^{3}$.

For later use, we recall the relation between the harmonic-map data
and the induced metric \cite{KZ2026,KZ2026agag}. Write
\[
    F_{\lambda}^{-1}dF_{\lambda}
    =
    \hat U_{1}^{\lambda}\,dz
    +
    \hat V_{1}^{\lambda}\,d\bar z .
\]
In a suitable diagonal unitary gauge, the $(1,0)$-part takes the form
\begin{equation}\label{eq:Uhat1}
    \hat U_{1}^{\lambda}
    =
    \begin{pmatrix}
        \dfrac14\hat u_{z}
        &
        -\dfrac{\epsilon}{\sqrt{2}}\lambda^{-1}
        e^{\hat u/2}
        \\[2mm]
        \dfrac{1}{\sqrt{2}}\lambda^{-1}
        \hat\alpha e^{-\hat u/2}
        &
        -\dfrac14\hat u_{z}
    \end{pmatrix},
\end{equation}
where $\hat u$ is real and
$\hat\alpha\,dz^{2}$ is the corresponding holomorphic quadratic
differential. The induced metric is related to these data by
\begin{equation}\label{eq:metric-reconstruction}
    ds_{M}^{2}
    =
    2e^{u}\,dz\,d\bar z
    =
    \left(
        e^{\hat u}
        +
        |\hat\alpha|^{2}e^{-\hat u}
    \right)dz\,d\bar z .
\end{equation}

We also recall the basic terminology for Lagrangian variations.
We assume that $N$ is K\"ahler, with complex structure $J$ and
K\"ahler form $\omega$, and that $f:M\to N$ is Lagrangian. Associated with
a normal vector field $V$ is the one-form
\[
\alpha_{V}:=f^{*}(V \rfloor \omega), \qquad \alpha_{V}(X)
=
\omega(V,df(X))
\]
for $X\in TM$. The normal vector field $V$ is called \textit{Lagrangian} if $\alpha_V$ is closed,
\[
    d\alpha_V=0,
\]
and \textit{Hamiltonian} if $\alpha_V$ is exact,
\[
    \alpha_V=df_V
\]
for some smooth function $f_V$. Throughout this paper, Hamiltonian stability is understood
with respect to Hamiltonian normal vector fields along the immersion. On a compact domain $\Omega \subset M$ with smooth boundary, we consider variations that are fixed in a neighborhood of $\partial \Omega$. Their normal variational fields satisfy
\[
\operatorname{supp} V \Subset \Omega^\circ.
\]
For a Hamiltonian normal field with $\alpha_{V} = df_{V}$, this is equivalent to
\[
\operatorname{supp} (df_{V}) \Subset \Omega^\circ.
\]
The potential $f_{V}$ itself need not have compact support; it is constant near each boundary component, and these constants need not agree.

Suppose in addition that $f$ is minimal and that the ambient space $(N,g)$ is K\"ahler–Einstein, with $\operatorname{Ric}_{N}= \kappa g$. The second variation formula of \cite[Theorem 3.5]{Oh1990} then gives
\begin{equation}\label{eq: second variation formula}
    \mathscr{A}''_{V}(0) = \int_{\Omega}\left\langle \left( \Delta_{H} - \kappa \right)df_{V}, df_{V}  \right\rangle dA = \int_{\Omega} \left( (\Delta_{H}f_{V})^{2} - \kappa |df_{V}|^{2} \right)dA. 
\end{equation}
Here $\Delta_{H} = d\delta + \delta d$ denotes the nonnegative Hodge Laplacian, acting on the appropriate degree. Indeed, 
\begin{equation*}
    \int_{\Omega}\left\langle \Delta_{H}df_{V}, df_{V} \right\rangle dA = \int_{\Omega} \left( |d(df_{V})|^{2} + |\delta df_{V}|^{2} \right)dA = \int_{\Omega}\left( \Delta_{H}f_{V} \right)^{2} dA.
\end{equation*}
No boundary term occurs because $df_{V}$ vanishes in a neighborhood of $\partial \Omega$. 

For $\theta>0$, we say that $f^\lambda$ \textit{closes in the
$y$-direction} with period $\theta$ if
\[
    f^\lambda(z+i\theta)=f^\lambda(z)
\]
for every $z$ in the equivariant domain $D$. Let
\[
    \mathfrak f^\lambda:
    D\longrightarrow
    \begin{cases}
        \mathbb S^7\subset\mathbb C^4,
        & \text{in the $Q_2$ case},\\[2pt]
        \mathbb H^7_3(-1) \subset\mathbb C^4_2,
        & \text{in the $Q_2^*$ case}
    \end{cases}
\]
denote the normalized lift obtained by multiplying the column
vector in the reconstruction formula above by $1/\sqrt{2}$, so that
$f^\lambda=[\mathfrak f^\lambda]$.
Since $f^\lambda$ is projective, the closing condition is
equivalent to the existence of a function
$\gamma:D\to\mathbb C^*$ such that
\[
    \mathfrak f^\lambda(z+i\theta)
    =
    \gamma(z)\mathfrak f^\lambda(z)
    \qquad\text{for all }z\in D.
\]
Throughout this paper, quotient surfaces refer to quotients arising from closing in the $y$-direction; possible further quotients by additional independent periods are not considered.

\begin{Definition}[$\mathbb R$-equivariant potentials, \cite{SKKR,BRS2010}]
 Define 
 \begin{equation}\label{eq: Delaunay potential}
    \xi = A(\lambda) \, d z, \quad \text{where} \quad A(\lambda) = \begin{pmatrix}
 c & a\lambda^{-1} + b\lambda \\
 \epsilon (a\lambda + b\lambda^{-1}) & -c
\end{pmatrix}, 
\end{equation}
with $a, b \in \R^{*}$ and $c \in \R$.  
We call such potentials \textit{equivariant potentials}.
\end{Definition}
For a fixed \(\lambda_0\in \mathbb{S}^1\), the squared eigenvalues of
\(A(\lambda_0)\) and \(A(-\epsilon i\lambda_0)\) are
\[
    \mu_1^2
    =
    c^2+\epsilon
    |\lambda_0a+\lambda_0^{-1}b|^2,
    \qquad
    \mu_2^2
    =
    c^2+\epsilon
    |\lambda_0a-\lambda_0^{-1}b|^2.
\]
Equivalently,
\begin{equation*}
\mu_{1}^{2} = c^{2} + \epsilon a^{2} + \epsilon b^{2} + \epsilon(\lambda_{0}^{2} + \lambda_{0}^{-2})a b, \quad \mu_{2}^{2} = c^{2} + \epsilon a^{2} + \epsilon b^{2} - \epsilon (\lambda_{0}^{2} + \lambda_{0}^{-2})a b.
\end{equation*}
Whenever these quantities are positive, we denote by
\(\mu_1,\mu_2>0\) their positive square roots.

For later use in the closing problem, we record the transformation
of the extended frame under translation in the equivariant
direction. For $\theta\in\mathbb R$, let
\[
    \gamma_{\theta}(z)
    :=
    z+i\theta .
\]
Since the solution of $d\Phi=\Phi\xi,~ \Phi(0,\lambda)=I$ is
\[
    \Phi(z,\lambda)
    =
    \exp\bigl(zA(\lambda)\bigr),
\]
the equivariance of the Iwasawa factor gives
\begin{equation}\label{eq: equivariance relation}
    F_{\lambda}(z+i\theta)
    =
    \exp\bigl(i\theta A(\lambda)\bigr)
    F_{\lambda}(z), \quad F_{-\epsilon i\lambda}(z+i\theta)
    =
    \exp\bigl(i\theta A(-\epsilon i\lambda)\bigr)
    F_{-\epsilon i\lambda}(z).
\end{equation}
Thus, if
\[
    M_{1}(\theta,\lambda)
    :=
    \exp\bigl(i\theta A(\lambda)\bigr),
    \qquad
    M_{2}(\theta,\lambda)
    :=
    \exp\bigl(i\theta A(-\epsilon i\lambda)\bigr),
\]
then \eqref{eq:XY-associated-family} gives
\begin{equation}\label{eq:translation-XY}
    X^{\lambda}(z+i\theta)
    =
    M_{1}X^{\lambda}(z)M_{2}^{-1},
    \qquad
    Y^{\lambda}(z+i\theta)
    =
    M_{1}Y^{\lambda}(z)M_{2}^{-1}.
\end{equation}
Consequently, the closing problem for $f^{\lambda}$ reduces to the
action of the two monodromy matrices
\[
    \exp\bigl(i\theta A(\lambda)\bigr),
    \qquad
    \exp\bigl(i\theta A(-\epsilon i\lambda)\bigr)
\]
on the reconstruction \eqref{eq:reconstruction-f-lambda}.

\section{The associated $\mathbb{S}^{1}$-family of $\mathbb{R}$-equivariant minimal Lagrangian surfaces in $Q_{2}$ and $Q_{2}^{*}$}\label{sc: R equivariant}
We now recall the metric data for the equivariant families and study
their closing behavior.

\subsection{$\mathbb R$-equivariant minimal Lagrangian surfaces in $Q_2$}
The following computation is based on the Delaunay-frame
construction in \cite[Theorem 3]{SKKR}. For completeness, we derive the Maurer--Cartan coefficients
and the profile equation directly from the twisted equivariant
potential \eqref{eq: Delaunay potential}, retaining the
parameter $c\in\mathbb R$. Write
\[
    F_{\lambda}^{-1}dF_{\lambda}
    =
    \Theta_{1}\,dx+\Theta_{2}\,dy .
\]
Differentiating \eqref{eq: equivariance relation} with respect to $\theta$ at $\theta=0$ gives
\[
    (F_{\lambda})_{y}
    =
    iA(\lambda)F_{\lambda}.
\]
Hence
\begin{equation}\label{eq:theta2-conjugacy}
    \Theta_{2}
    =
    F_{\lambda}^{-1}iA(\lambda)F_{\lambda},
\end{equation}
so that $\Theta_{2}$ and $iA(\lambda)$ have the same determinant. To obtain these components explicitly, use the normalized
Iwasawa decomposition
\[
    \Phi(z,\lambda)=F_\lambda(z)B_+(z,\lambda),
    \qquad
    B_+\in\Lambda^+\mathrm{SL}(2,\mathbb C)_\sigma,
\]
where $\Lambda^+\mathrm{SL}(2,\mathbb C)_\sigma$ denotes the
positive twisted loop subgroup. In particular, $B_+$ extends
holomorphically to $|\lambda|<1$ and satisfies
\[
    B_+(z,-\lambda)
    =
    \sigma_3B_+(z,\lambda)\sigma_3.
\]
By the normalization,
\[
    B_+(z,0)
    =
    \begin{pmatrix}
        r&0\\
        0&r^{-1}
    \end{pmatrix},
    \qquad r>0.
\]
Since
\[
    \Phi(z+i\theta,\lambda)
    =
    e^{i\theta A(\lambda)}\Phi(z,\lambda)
    =
    \bigl(e^{i\theta A(\lambda)}F_\lambda(z)\bigr)
    B_+(z,\lambda),
\]
and $e^{i\theta A(\lambda)}$ is an $SU(2)$-valued twisted
loop, the right-hand side is another normalized Iwasawa
decomposition of $\Phi(z+i\theta,\lambda)$.
Uniqueness therefore gives
\[
    B_+(z+i\theta,\lambda)=B_+(z,\lambda).
\]
Thus $B_+$ is independent of $y$, and in particular $r=r(x)$. Using $\Phi_z=\Phi A(\lambda)$ and $\Phi_{\bar z}=0$,
differentiation of $\Phi=F_\lambda B_+$ gives, in the
notation introduced in Section~\ref{Equivariant setup},
\[
    \hat U_{1}^\lambda
    =
    B_+A(\lambda)B_+^{-1}-(B_+)_zB_+^{-1},
    \qquad
    \hat V_{1}^\lambda
    =
    -(B_+)_{\bar z}B_+^{-1}.
\]
Since both $B_+$ and $B_+^{-1}$ extend holomorphically to
the unit disk, and $A(\lambda)$ has at most a simple pole at
$\lambda=0$, the form $\hat U_1^\lambda$ has no powers below
$\lambda^{-1}$, whereas $\hat V_1^\lambda$ has no negative
powers. On $\mathbb S^1$, the reality condition
\[
    \hat V_1^\lambda
    =
    -(\hat U_1^\lambda)^\dagger,
\]
where $\dagger$ denotes the conjugate transpose, together with $\bar\lambda=\lambda^{-1}$, excludes all
positive powers from $\hat U_1^\lambda$.
Thus $\hat U_1^\lambda$ contains only the powers
$\lambda^{-1}$ and $\lambda^0$. Its $\lambda^{-1}$ coefficient is obtained by conjugating
the corresponding coefficient of $A(\lambda)$ by $B_+(z,0)$.
Moreover, the constant coefficient of $\hat V_{1}^\lambda$
is $-(r'/2r)\sigma_3$. Hence
\begin{equation}\label{eq: hat U1 hat V1}
    \hat U_{1}^\lambda
    =
    \frac{r'}{2r}\sigma_3
    +
    \lambda^{-1}
    \begin{pmatrix}
        0&ar^2\\
        br^{-2}&0
    \end{pmatrix},
    \qquad
    \hat V_{1}^\lambda=-(\hat U_{1}^{\lambda})^{\dagger}.
\end{equation}
Define
\[
    v(x):=2b\,r(x)^{-2}, 
\]
and we refer to \(v\) as the equivariant profile function. Using \eqref{eq: hat U1 hat V1} and 
\[
    \Theta_1=\hat U_{1}^\lambda+\hat V_{1}^\lambda,
    \qquad
    \Theta_2=i(\hat U_{1}^\lambda-\hat V_{1}^\lambda),
\]
we obtain
\begin{equation}\label{eq:theta2-Q2}
\Theta_{1}=
    \begin{pmatrix}
        0
        &
        \dfrac{2ab}{\lambda v} - \dfrac{v\lambda}{2}
        \\[2mm]
        -\dfrac{2ab\lambda}{v} + \dfrac{v}{2\lambda}
        &
        0
    \end{pmatrix}, \quad
    \Theta_{2}
    =
    i
    \begin{pmatrix}
        -\dfrac{v'}{2v}
        &
        \dfrac{2ab}{\lambda v}
        +\dfrac{v\lambda}{2}
        \\[2mm]
        \dfrac{2ab\lambda}{v}
        +\dfrac{v}{2\lambda}
        &
        \dfrac{v'}{2v}
    \end{pmatrix}.
\end{equation}
 The normalization at $z=0$ gives $r(0)=1$ and $F_\lambda(0)=I$.
Thus $v(0)=2b$. Comparing the diagonal entries in
$\Theta_2(0)=iA(\lambda)$ further gives 
\[
    -\frac{v'(0)}{2v(0)}=c. 
\]
Hence $v'(0)=-4bc$. Taking determinants in \eqref{eq:theta2-conjugacy} and using
\eqref{eq:theta2-Q2}, we obtain
\begin{equation*}
    \det\Theta_{2}
    =
    \frac{(v')^{2}}{4v^{2}}
    +
    \frac{4a^{2}b^{2}}{v^{2}}
    +
    ab(\lambda^{2}+\lambda^{-2})
    +
    \frac{v^{2}}{4}.
\end{equation*}
On the other hand,
\[
    \det\bigl(iA(\lambda)\bigr)
    =
    c^{2}+a^{2}+b^{2}
    +
    ab(\lambda^{2}+\lambda^{-2}).
\]
Therefore,
\begin{equation}\label{vx1}
    (v')^{2}
    =
    -\bigl(v^{2}-4a^{2}\bigr)
     \bigl(v^{2}-4b^{2}\bigr)
    +
    4c^{2}v^{2}.
\end{equation}
The Maurer--Cartan equation 
\[
    (\Theta_2)_x+[\Theta_1,\Theta_2]=0
\]
also yields, from its diagonal component,
\[
    vv''-(v')^2=16a^2b^2-v^4.
\]
Combining this identity with \eqref{vx1}, we obtain
\begin{equation}
    v''
    =
    4(a^2+b^2+c^2)v-2v^3.
\end{equation}
Together with
\[
    v(0)=2b,\qquad v'(0)=-4bc,
\]
this second-order equation uniquely determines the frame
profile, and \eqref{vx1} is its first integral.
The resulting solution can be written in the form
\begin{equation}\label{eq: solution v}
    v(x) = \operatorname{sgn}(b) \sqrt{W_{2}} \operatorname{dn}\left(\sqrt{W_{2}}(x - x_{0}), \sqrt{1 - W_{1}/W_{2}} \right),
\end{equation}
where $\operatorname{dn}$ denotes the Jacobi elliptic function,
$\sqrt{1-W_1/W_2}$ is its modulus,
$x_0$ is chosen so that $v(0)=2b$ and $v'(0)=-4bc$, and
\begin{align*}
    W_{1} &= 2(a^{2} + b^{2} + c^{2}) - 2\sqrt{(a^{2} + b^{2} + c^{2})^{2} - 4a^{2}b^{2}}, \\
    W_{2} &= 2(a^{2} + b^{2} + c^{2}) + 2\sqrt{(a^{2} + b^{2} + c^{2})^{2} - 4a^{2}b^{2}}. 
\end{align*}

From \eqref{eq:theta2-Q2}, we obtain
\begin{equation}\label{eq:equivariant-Uhat}
    \frac12(\Theta_{1}-i\Theta_{2})
    =
    \begin{pmatrix}
        -\dfrac{v'}{4v}
        &
        \dfrac{2ab}{\lambda v}
        \\[2mm]
        \dfrac{v}{2\lambda}
        &
        \dfrac{v'}{4v}
    \end{pmatrix}.
\end{equation}
Since $dx=(1/2)(dz+d\bar z),~ dy=(-i/2)(dz-d\bar z)$, the left-hand side of \eqref{eq:equivariant-Uhat} is the $(1,0)$-part of the Maurer--Cartan form of the normalized equivariant frame. Comparing the absolute values of the upper-right entries of \eqref{eq:equivariant-Uhat} and
\eqref{eq:Uhat1}, and using $|\lambda|=1$, we obtain
\begin{equation}\label{eq:uhat-v}
    e^{\hat u}
    =
    8|ab|^{2}v^{-2}.
\end{equation}
Comparing the absolute values of the lower-left entries and using \eqref{eq:uhat-v}, we obtain
\begin{equation}\label{eq:alphahat-ab}
    |\hat\alpha|^{2}
    =
    4|ab|^{2}.
\end{equation}
Substituting \eqref{eq:uhat-v} and
\eqref{eq:alphahat-ab} into the metric relation
\eqref{eq:metric-reconstruction}, we obtain
\begin{equation}\label{eq: cylinder metric}
  ds^{2}_{M}
  =
  2e^{u}\, dz\,d\bar z
  =
  \frac12
  \left(v(x)^{2}+16|ab|^{2}\,v(x)^{-2}\right)dz\,d\bar z,
\end{equation}
where $v = v(x)$ is the solution \eqref{eq: solution v} of \eqref{vx1}. Furthermore, we can prove the completeness of this induced metric.
\begin{Proposition}\label{prop: complete}
The $\mathbb{R}$-equivariant minimal Lagrangian surfaces in $Q_{2}$ constructed from the $\mathbb{R}$-equivariant potentials \eqref{eq: Delaunay potential} are complete with respect to the induced metric \eqref{eq: cylinder metric}. Moreover, any quotient to which the immersion descends is also complete.
\end{Proposition}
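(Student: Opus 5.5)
The plan is to show that the conformal factor in \eqref{eq: cylinder metric} is bounded above and below by positive constants on the whole $z$-plane. Then the induced metric is uniformly equivalent to the flat metric $dz\,d\bar z$ on $\mathbb C$, and completeness follows at once.

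\textbf{Step 1: a priori bounds on $v$.} The profile $v$ is given explicitly by \eqref{eq: solution v}, so it is a real-analytic function defined on all of $\mathbb R$. Since $a,b\neq0$, the product $W_1W_2=16a^2b^2$ is positive. Hence $0<W_1\le W_2$, and the modulus $k=\sqrt{1-W_1/W_2}$ lies in $[0,1)$. For $k<1$ the function $\operatorname{dn}$ is periodic and satisfies
\[
\sqrt{1-k^2}\le \operatorname{dn}\le 1 .
\]
Therefore $\sqrt{W_1}\le |v(x)|\le \sqrt{W_2}$ for all $x\in\mathbb R$. If one prefers not to rely on the closed form, the same bounds follow from the first integral \eqref{vx1}. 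The right-hand side $-(v^2)^2+4(a^2+b^2+c^2)v^2-16a^2b^2$ is a quadratic in $v^2$ with roots $W_1,W_2$. Since $(v')^2\ge0$, the value $v^2$ is confined to $[W_1,W_2]$, the interval containing $v(0)^2=4b^2$. In particular $v$ never vanishes, so the coefficient in \eqref{eq: cylinder metric} is defined everywhere.

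\textbf{Step 2: comparison with the flat metric.} The function $t\mapsto \tfrac12(t+16a^2b^2/t)$ is continuous and positive on $[W_1,W_2]$. It therefore takes values in an interval $[m,M']$ with $0<m\le M'$, and so
\[
m\,|dz|^2\le ds_M^2\le M'\,|dz|^2 .
\]
By AM--GM we may even take $m\ge 4|ab|$.

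\textbf{Step 3: completeness of $\mathbb C$ and of the quotients.} Let $\gamma$ be a divergent curve, that is, one leaving every compact subset of $\mathbb C$. Its Euclidean length is infinite, so its induced length is at least $\sqrt m$ times this and is infinite as well. By Hopf--Rinow, $(\mathbb C,ds_M^2)$ is complete.

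Now suppose the immersion descends to a quotient $\mathbb C/\Gamma$. Here $\Gamma$ is generated by $z\mapsto z+i\theta$ with $\theta>0$, as in the closing definition. The metric \eqref{eq: cylinder metric} depends only on $x$, so $\Gamma$ acts by isometries, freely and properly discontinuously. A Riemannian quotient of a complete manifold by such a group is complete, since Cauchy sequences lift and geodesics extend. Alternatively, the quotient metric is bounded below by $m$ times the flat cylinder metric, which is complete. This settles the second statement.

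\textbf{Main obstacle.} The only delicate point is Step 1, namely making sure $v$ exists globally on $\mathbb R$ and is bounded away from $0$. Both facts rely on $a,b\neq0$, which gives $W_1>0$. I would handle this through the first integral \eqref{vx1} rather than the elliptic-function formula. The degenerate case $W_1=W_2$ (flat case, $v$ constant) then needs no separate treatment.
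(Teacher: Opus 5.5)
Your proposal is correct and follows the paper's route. The paper also uses the AM--GM lower bound $ds_M^2\ge 4|ab|\,|dz|^2$ to get completeness on $\mathbb C$, and then concludes that the quotient is complete because the quotient map is a Riemannian covering. Your a priori bounds $W_1\le v^2\le W_2$ and the resulting upper bound on the conformal factor are not needed for completeness. They do no harm, though, since they confirm that $v$ is globally defined and nonvanishing, which the paper takes directly from the construction.
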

\begin{proof}
By the AM--GM inequality,
\[
    \frac12
    \left(v^2+16|ab|^2v^{-2}\right)
    \geq 4|ab|>0.
\]
Hence, on the universal cover \(\widetilde M\simeq\mathbb C\),
\[
    ds_M^2\geq 4|ab|\,|dz|^2.
\]
Thus the induced metric is bounded below by a positive constant
multiple of the Euclidean metric and is therefore complete.

If the immersion descends to a quotient
\(M=\mathbb C/\Gamma\), then the deck transformations preserve the
induced metric. Hence the quotient map is a Riemannian covering, and
completeness of the covering metric implies completeness of the
quotient.
\end{proof}

\begin{Remark}\label{Rmk: flat case}
When $c=0$ and $|a|=|b|$, the profile $v\equiv2b$ is constant, and \eqref{eq: cylinder metric} gives
\[
    ds_M^2=4a^2|dz|^2.
\]
In particular, the Gaussian curvature $K\equiv0$. We refer to this as the flat case. By assuming $c^2+(|a|-|b|)^2>0$, we exclude this case below whenever
nondegeneracy is required.
\end{Remark}

\subsection{$\mathbb R$-equivariant minimal Lagrangian surfaces in $Q_{2}^{*}$} 
The construction of \cite[Theorem 5.1]{BRS2010} gives
\begin{equation}\label{vx2}
    (v')^2
    =(v^2-4a^2)(v^2-4b^2)+4c^2v^2. 
\end{equation}
Here $v$ is the profile of the normalized equivariant frame, with
\[
    v(0)=2b,\qquad v'(0)=-4bc.
\]
Let $ I=(-\kappa_1^2,\kappa_2^2)$ be the maximal interval containing $0$ on which \(v\) is finite and nonzero, and set
\[
    \Sigma :=I\times\mathbb R
    =
    \{x+iy\in\mathbb C:x\in I\}.
\]
The same computation as above, now with $\epsilon=-1$, gives
the same induced metric as \eqref{eq: cylinder metric}
\[
    ds^{2}_{M}
  =
  2e^{u}\, dz\,d\bar z
  =
  \frac12
  \left(v(x)^{2}+16|ab|^{2}\,v(x)^{-2}\right)dz\,d\bar z,
\]
where $v=v(x)$ denotes the real solution of \eqref{vx2} on the chosen maximal interval $I$. 

\begin{Remark}\label{Rmk: timelike axes}
    Unlike the case of $Q_2$, throughout the $Q_2^*$ case we
restrict ourselves to the equivariant potentials for which
the equivariance actions of the two associated spacelike
CMC surfaces have linear parts with timelike axes for every
$\lambda\in \mathbb{S}^1$. Equivalently, we assume that
    \[
        |\lambda a+\lambda^{-1}b|^{2}-c^{2}<0,
        \qquad
        |\lambda a-\lambda^{-1}b|^{2}-c^{2}<0
        \qquad\text{for all } \lambda\in \mathbb{S}^{1}.
    \]
    With the Lorentzian inner product $\langle X,Y\rangle=(1/2)\operatorname{tr}(XY)$ on
$\mathfrak{su}(1,1)\cong\mathbb R^{2,1}$, these inequalities
say that $iA(\lambda)$ and $iA(i\lambda)$, which represent
the axis directions of the two linear parts, are timelike, see \cite[Section~3.1 and Corollary~5.3]{BRS2010}.

    Since $a,b\in\mathbb R^{*}$, we have
    \[
        \max_{\lambda\in \mathbb{S}^{1}}
        |\lambda a\pm\lambda^{-1}b|^{2}
        =(|a|+|b|)^{2}.
    \]
    Hence the above assumption is equivalent to
    \[
        c^{2}>(|a|+|b|)^{2}.
    \]
    In particular,
    \[
        c^{2}-a^{2}-b^{2}>2|ab|.
    \]
    Under this assumption, the two projections onto the
$\mathbb H^2$-factors are the Gauss maps of spacelike
equivariant CMC surfaces whose equivariance actions have
linear parts with timelike axes throughout the associated
$\mathbb{S}^1$-family.
 
    This assumption also has a spectral role in the closing problem.
Indeed, for every $\lambda\in \mathbb{S}^1$,
\[
    \mu_j(\lambda)^2
    \ge
    c^2-(|a|+|b|)^2
    >0,
    \qquad j=1,2.
\]
Thus $A(\lambda)$ and $A(i\lambda)$ have distinct real eigenvalues throughout the associated family. Consequently, their monodromy matrices are diagonalizable over $\mathbb C$ with eigenvalues
\[
    e^{\pm i\theta\mu_j(\lambda)}.
\]
This property will be used in the proof of the closing criterion in
Theorem~\ref{thm:closing-criterion}.
\end{Remark}

Under the standing assumption, the real elliptic representation of
the profile function can be obtained directly from its differential
equation.

\begin{Lemma}[Real Jacobi representation]
\label{lem:real-sc}
Let $a,b\in\mathbb R^*$ and $c\in\mathbb R$ satisfy
\[
    c^2>(|a|+|b|)^2.
\]
Set
\[
    D:=c^2-a^2-b^2,
    \qquad
    Y_1:=2D-2\sqrt{D^2-4a^2b^2},
    \qquad
    Y_2:=2D+2\sqrt{D^2-4a^2b^2},
\]
and define
\[
    k:=\sqrt{1-\frac{Y_1}{Y_2}},
    \qquad
    \sigma:=-\operatorname{sgn}(bc).
\]
Then $0<Y_1<Y_2$ and $0<k<1$. The real solution of
\[
    (v')^2
    =(v^2-4a^2)(v^2-4b^2)+4c^2v^2,
    \qquad
    v(0)=2b,
    \qquad
    v'(0)=-4bc,
\]
is given by
\begin{equation}\label{eq: solution v2}
    v(x)
    =
    \sqrt{Y_1}\,
    \operatorname{sc}
    \left(
        \sigma\sqrt{Y_2}(x-x_0),k
    \right),
\end{equation}
where
$\operatorname{sc}(u,k):=\operatorname{sn}(u,k)/
\operatorname{cn}(u,k)$,
with $\operatorname{sn}$ and $\operatorname{cn}$ denoting
the Jacobi elliptic sine and cosine functions with modulus $k$,
respectively. The shift $x_0$ is chosen so that $v(0)=2b$.

The maximal interval $I$ containing $0$ on which $v$ is finite
and nonzero has two finite endpoints, one corresponding to
a zero and the other to an adjacent pole of the $\operatorname{sc}$-function.
\end{Lemma}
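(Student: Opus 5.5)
The plan is to verify directly that the proposed function satisfies the ODE and the initial data, then appeal to uniqueness. The first step is elementary. Since $c^2>(|a|+|b|)^2$, we have $D=c^2-a^2-b^2>2|ab|>0$, so $D^2-4a^2b^2>0$. This gives $0<Y_1<Y_2$, because $Y_1Y_2=16a^2b^2>0$ and $Y_1+Y_2=4D>0$. Hence $0<k<1$. Next I will factor the right-hand side of the ODE in $w=v^2$:
\[
(v^2-4a^2)(v^2-4b^2)+4c^2v^2=v^4+4Dv^2+16a^2b^2=(v^2+Y_1)(v^2+Y_2),
\]
using $Y_1+Y_2=4D$ and $Y_1Y_2=16a^2b^2$. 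Thus the equation reads $(v')^2=(v^2+Y_1)(v^2+Y_2)$, with both roots in $v^2$ negative. This is the standard form solved by $\operatorname{sc}$.

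For the second step I use the Jacobi identities
\[
\frac{d}{du}\operatorname{sc}=\operatorname{dc}\,\operatorname{nc},\qquad \operatorname{nc}^2=1+\operatorname{sc}^2,\qquad \operatorname{dc}^2=1+k'^2\operatorname{sc}^2,\qquad k'^2=1-k^2=Y_1/Y_2.
\]
For $v=\sqrt{Y_1}\operatorname{sc}(\sigma\sqrt{Y_2}(x-x_0),k)$ they give
\[
(v')^2=Y_1Y_2(1+\operatorname{sc}^2)(1+k'^2\operatorname{sc}^2)=(Y_1+v^2)(Y_2+v^2).
\]
So every such function solves the first integral. Alternatively I can derive it by the substitution $v=\sqrt{Y_1}\,t$ and separation of variables, recognizing the inverse $\operatorname{sc}$ integral $\int dt/\sqrt{(1+t^2)(1+k'^2t^2)}$.

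The main obstacle is the initial data and uniqueness, since the first-order equation alone does not single out a solution. I will argue as in the $Q_2$ case. The Maurer--Cartan equation with $\epsilon=-1$ gives a second-order equation $v''=2v^3+4Dv$, obtained by differentiating the first integral wherever $v'\neq0$. Because $(v')^2\ge Y_1Y_2>0$, $v'$ never vanishes, so the first-order equation with the sign of $v'$ fixed already has locally Lipschitz right-hand side $\pm\sqrt{(v^2+Y_1)(v^2+Y_2)}$, and uniqueness holds directly. The sign is fixed by $v'(0)=-4bc$, whose sign is $\sigma=-\operatorname{sgn}(bc)$; note $c\ne0$ here. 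Since $\operatorname{sc}$ is strictly increasing on each branch and $dv/dx$ has sign $\sigma$, the choice of $\sigma$ matches $v'(0)$. Then $x_0$ is chosen on a branch of $\operatorname{sc}$, which maps $(-K,K)$ bijectively onto $\mathbb R$, so that $\sqrt{Y_1}\operatorname{sc}(\sigma\sqrt{Y_2}(0-x_0))=2b$. The consistency $|v'(0)|=4|bc|$ follows from the first integral at $v=2b$: $(4b^2-4a^2)\cdot0+16b^2c^2$.

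Finally, on the branch $\sigma\sqrt{Y_2}(x-x_0)\in(-K,K)$, the function $\operatorname{sc}$ is finite and vanishes only at $0$. The maximal interval containing $0$ on which $v$ is finite and nonzero is therefore the half-branch on the side of $0$ determined by $\operatorname{sgn}(b)$. One endpoint is where $\operatorname{sc}=0$, i.e.\ $x=x_0$. The other is where $\operatorname{sc}\to\pm\infty$, i.e.\ $|x-x_0|=K(k)/\sqrt{Y_2}$. Both endpoints are finite and adjacent, as claimed.
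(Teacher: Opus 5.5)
Your proposal is correct and follows essentially the same route as the paper: you factor the right-hand side as $(v^2+Y_1)(v^2+Y_2)$, check the $\operatorname{sc}$-ansatz with the Jacobi identities, fix $x_0$ on the branch $(-K,K)$ and the sign of $v'$ via $\sigma$, apply uniqueness for $v'=\sigma\sqrt{(v^2+Y_1)(v^2+Y_2)}$ (valid because $(v')^2\ge Y_1Y_2>0$), and read off the endpoints from an adjacent zero and pole of $\operatorname{sc}$. Your aside about the second-order Maurer--Cartan equation is not needed, since, as you note yourself, the nonvanishing of $v'$ already gives uniqueness for the first-order equation.
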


\begin{proof}
Since $D>2|ab|$, we have
\[
    0<Y_1<Y_2,
    \qquad
    Y_1+Y_2=4D,
    \qquad
    Y_1Y_2=16a^2b^2.
\]
Hence the differential equation becomes
\[
    (v')^2=(v^2+Y_1)(v^2+Y_2).
\]
For $s(u)=\operatorname{sc}(u,k)$, the Jacobi identities give
\[
    s'(u)
    =
    \frac{\operatorname{dn}(u,k)}
         {\operatorname{cn}^2(u,k)},
    \qquad
    (s')^2
    =
    (1+s^2)\bigl(1+(1-k^2)s^2\bigr).
\]
Let $w$ denote the right-hand side of \eqref{eq: solution v2}.
Using $1-k^2=Y_1/Y_2$, we obtain
\[
    (w')^2
    =
    Y_1Y_2
    \left(1+\frac{w^2}{Y_1}\right)
    \left(1+\frac{w^2}{Y_2}\right)
    =
    (w^2+Y_1)(w^2+Y_2).
\]

Write $K=K(k)$ for the complete elliptic integral of the first kind.
The function $\operatorname{sc}(\,\cdot\,,k)$ is a strictly
increasing bijection from $(-K,K)$ onto $\mathbb R$.
Choose $u_0\in(-K,K)$ such that
\[
    \operatorname{sc}(u_0,k)=\frac{2b}{\sqrt{Y_1}},
    \qquad
    x_0=-\frac{u_0}{\sigma\sqrt{Y_2}}.
\]
Then $w(0)=2b$. Moreover, $\operatorname{sgn}(w')=\sigma$, and
\[
    w'(0)
    =
    \sigma\sqrt{(4b^2+Y_1)(4b^2+Y_2)}
    =
    \sigma\,4|bc|
    =
    -4bc.
\]
Since $(v^2+Y_1)(v^2+Y_2)>0$, the prescribed initial derivative
fixes the sign of $v'$ throughout its interval of existence.
Thus uniqueness for the smooth first-order equation
\[
    v'
    =
    \sigma\sqrt{(v^2+Y_1)(v^2+Y_2)}
\]
shows that $v=w$.

Finally, on the real axis the zeros and poles of
$\operatorname{sc}(u,k)$ occur at $u=2mK$ and
$u=(2m+1)K$, respectively, with $m\in\mathbb Z$.
Restricting \eqref{eq: solution v2} to the component containing $x=0$
on which it is finite and nonzero therefore gives the stated
description of the endpoints of $I$.
\end{proof}

\subsection{Closing criterion and distribution of closing parameters}
We now give a necessary and sufficient condition for a member of the associated $\mathbb{S}^{1}$-family to close in the $y$-direction. The proof uses the projective reconstruction in Section \ref{Equivariant setup} and the action of the two monodromy matrices on $M_{2}(\mathbb{C})$. We also determine the least positive closing period and the distribution of closing parameters in the associated family.

\begin{Theorem}\label{thm:closing-criterion}
Let $\{f^\lambda\}_{\lambda\in \mathbb{S}^1}$ be the associated family of
$\mathbb R$-equivariant minimal Lagrangian immersions into $Q_2$ or $Q_2^*$
constructed from the equivariant potential \eqref{eq: Delaunay potential},
with $a,b\in\mathbb R^*$ and $c\in\mathbb R$. Assume, as in Remarks~\ref{Rmk: flat case}
and~\ref{Rmk: timelike axes}, that
\[
\begin{cases}
c^2+(|a|-|b|)^2>0, & \text{in the $Q_2$ case},\\
c^2>(|a|+|b|)^2, & \text{in the $Q_2^*$ case}.
\end{cases}
\]
Let $\mu_1(\lambda),\mu_2(\lambda)>0$ be given by
\[
\mu_1(\lambda)^2
=c^2+\epsilon|\lambda a+\lambda^{-1}b|^2,
\qquad
\mu_2(\lambda)^2
=c^2+\epsilon|\lambda a-\lambda^{-1}b|^2, 
\]
and let $D$ be the maximal equivariant domain. For any fixed
$\lambda_0\in \mathbb{S}^1$ and $\theta>0$, writing $\mu_j=\mu_j(\lambda_0)$,
we have
\begin{equation}
\label{eq:closing-period-iff}
f^{\lambda_0}(z+i\theta)=f^{\lambda_0}(z)
\quad\text{for all }z\in D
\quad\Longleftrightarrow\quad
\theta\mu_1,\ \theta\mu_2\in\pi\mathbb Z.
\end{equation}
Consequently, $f^{\lambda_0}$ closes in the $y$-direction if and only if
\begin{equation}
\label{eq:closing-rational-iff}
\frac{\mu_2}{\mu_1}\in\mathbb Q_{>0}.
\end{equation}
If $\mu_2/\mu_1=p/q$ with $p,q\in\mathbb Z_{>0}$ and $\gcd(p,q)=1$, then
its least positive $y$-closing period is
\begin{equation}
\label{eq:least-closing-period}
\theta_{\min}=\frac{q\pi}{\mu_1}=\frac{p\pi}{\mu_2},
\end{equation}
and every positive $y$-closing period is an integer multiple of
$\theta_{\min}$.

The set of closing parameters is dense and countable in $\mathbb{S}^1$, and hence
has zero standard arc-length measure. Its complement is dense and has
full arc-length measure. Each closing member descends to a quotient of
the maximal equivariant domain, namely a cylinder in the $Q_2$ case and
an annulus in the $Q_2^*$ case.
\end{Theorem}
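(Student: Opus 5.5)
The plan is to reduce the projective closing condition to a matrix identity for the extended frames, and then to show that this identity forces both monodromy matrices to be $\pm I$.

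\textbf{Translation as a real-linear map.} Write the normalized lift $\mathfrak f^{\lambda_0}$ as $\frac1{\sqrt2}\Psi(X^{\lambda_0},Y^{\lambda_0})$, where $\Psi$ is the real-linear map in \eqref{eq:reconstruction-f-lambda}. Multiplying $\Psi(X,Y)$ by $e^{i\varphi}$ gives
\[
\Psi\bigl(\cos\varphi\,X-\sin\varphi\,Y,\ \sin\varphi\,X+\cos\varphi\,Y\bigr).
\]
Indeed, multiplication by $i$ sends $\operatorname{Re}X+i\operatorname{Re}Y$ to $-\operatorname{Re}Y+i\operatorname{Re}X$, and similarly for the imaginary parts.

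\textbf{Reduction to a matrix identity.} Suppose $f^{\lambda_0}(z+i\theta)=f^{\lambda_0}(z)$. Since both lifts lie on $\mathbb S^7$ or $\mathbb H^7_3(-1)$, the factor satisfies $\gamma(z)=\pm e^{i\varphi(z)}$ with $|\gamma|=1$, and $\varphi$ can be taken continuous. The matrix entries used in $\Psi$ determine $X$ and $Y$, because of the $SU(2)$ or $SU(1,1)$ structure of $F_\lambda F_{-\epsilon i\lambda}^{-1}$. Combining this with \eqref{eq:translation-XY} yields
\[
M_1F_{\lambda_0}F_{-\epsilon i\lambda_0}^{-1}M_2^{-1}
=\pm F_{\lambda_0}\,e^{-i\varphi\sigma_3}\,F_{-\epsilon i\lambda_0}^{-1}.
\]
Equivalently,
\[
F_{\lambda_0}^{-1}M_1F_{\lambda_0}=\pm e^{-i\varphi\sigma_3}\,F_{-\epsilon i\lambda_0}^{-1}M_2F_{-\epsilon i\lambda_0}.
\]
By \eqref{eq:theta2-conjugacy}, the left side equals
\[
\cos(\theta\mu_1)I+\frac{\sin(\theta\mu_1)}{\mu_1}\,\Theta_2(\lambda_0),
\]
and the right side is $\pm e^{-i\varphi\sigma_3}$ times the analogous expression built from $\Theta_2(-\epsilon i\lambda_0)$. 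This uses $\Theta_2^2=-\mu^2 I$, which holds by the determinant identity.

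\textbf{The main obstacle: forcing $\sin(\theta\mu_j)=0$.} Both $\Theta_2$'s have explicit entries given by \eqref{eq:theta2-Q2}, with the same profile $v(x)$. Compare entries and traces:
\begin{itemize}
\item The traces and diagonal entries give relations involving $\cos\varphi$, $\sin\varphi$ and $v'/v$.
\item The off-diagonal entries involve $2ab/(\lambda v)\pm v\lambda/2$, with $\lambda=\lambda_0$ on one side and $\lambda=-\epsilon i\lambda_0$ on the other.
\end{itemize}
The functions $1$, $v$ and $v^{-1}$ are linearly independent as functions of $x$ whenever $v$ is nonconstant. In $Q_2$ this is exactly the nondegeneracy assumption of Remark~\ref{Rmk: flat case}; in $Q_2^*$ it holds by Lemma~\ref{lem:real-sc}. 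Independence then forces $\sin(\theta\mu_1)=\sin(\theta\mu_2)=0$, so $M_1=\pm I$ and $M_2=\pm I$, that is, $\theta\mu_1,\theta\mu_2\in\pi\mathbb Z$.

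I expect this independence bookkeeping, with $\varphi$ possibly depending on $x$, to be the delicate step. I would first use the trace equality to pin down $\varphi$, and then compare the off-diagonal coefficients of $v$ and $v^{-1}$.

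Two further points are needed here:
\begin{itemize}
\item We need $\mu_j>0$. In $Q_2^*$ this is Remark~\ref{Rmk: timelike axes}. In $Q_2$, $\mu_j=0$ would require $c=0$ and $|a|=|b|$, which is excluded.
\item The converse is immediate: if $M_1,M_2\in\{\pm I\}$, then $\mathfrak f$ changes only by a sign.
\end{itemize}

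\textbf{Arithmetic consequences.} The conditions $\theta\mu_1=q\pi$ and $\theta\mu_2=p\pi$ with $p,q\in\mathbb Z_{>0}$ have a positive solution $\theta$ iff $\mu_2/\mu_1=p/q\in\mathbb Q_{>0}$. The set of admissible $\theta$ is $\frac{\pi}{\mu_1}\mathbb Z\cap\frac{\pi}{\mu_2}\mathbb Z$. If $\gcd(p,q)=1$, this set is $\theta_{\min}\mathbb Z$ with $\theta_{\min}=q\pi/\mu_1=p\pi/\mu_2$.

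\textbf{Distribution of closing parameters.} Write $\lambda=e^{it}$. Then both $\mu_j^2$ are affine functions of $s=2\cos 2t$, with slopes $\pm\epsilon ab\neq0$ of opposite sign. Hence $\mu_2/\mu_1$ is a real-analytic, strictly monotone function of $s$ on $[-2,2]$, with a nondegenerate interval as range. On each arc where $\cos 2t$ is monotone, the preimage of $\mathbb Q_{>0}$ is therefore countable and dense. This gives a countable dense closing set of measure zero, whose complement is dense and of full measure.

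\textbf{Quotients.} Finally, $D/(i\theta_{\min}\mathbb Z)$ is $\mathbb C/i\theta_{\min}\mathbb Z$, a cylinder, in the $Q_2$ case. In the $Q_2^*$ case it is $I\times(\mathbb R/\theta_{\min}\mathbb Z)$, an annulus, since $I$ is a bounded interval by Lemma~\ref{lem:real-sc}.
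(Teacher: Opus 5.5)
Your reduction to $F_{\lambda_0}^{-1}M_1F_{\lambda_0}=\pm e^{-i\varphi\sigma_3}F_{-\epsilon i\lambda_0}^{-1}M_2F_{-\epsilon i\lambda_0}$ is correct, and so are the arithmetic, density and quotient parts. The gap is the decisive necessity step, which you assert rather than prove.

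At that stage $\varphi$ is only known to be a continuous function. Since both sides are built from $\Theta_2$, it is a function of $x$. Absorbing the sign into $\varphi$, the $(1,2)$-entry in the $Q_2$ case reads
\[
\frac{\sin\theta\mu_1}{\mu_1}\Bigl(\frac{2ab}{\lambda_0 v}+\frac{\lambda_0 v}{2}\Bigr)
= i e^{-i\varphi(x)}\,\frac{\sin\theta\mu_2}{\mu_2}\Bigl(\frac{2ab}{\lambda_0 v}-\frac{\lambda_0 v}{2}\Bigr).
\]
Linear independence of $1,v,v^{-1}$ gives nothing here, because the coefficient $e^{-i\varphi(x)}$ is not known to be constant.

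Your proposed remedy does not repair this. The trace identity $\cos\theta\mu_1=\cos\theta\mu_2\cos\varphi-\frac{\sin\theta\mu_2}{\mu_2}\frac{v'}{2v}\sin\varphi$ is a single real equation and does not force $\varphi$ to be constant. The problem is not cosmetic. If $\lambda_0^2=\pm i$, then $\mu_1=\mu_2$. For any $\theta$ with $\sin\theta\mu_1\neq 0$, both off-diagonal equations are satisfied by the nonconstant unimodular function $e^{-i\varphi}=-i(4ab+\lambda_0^2v^2)/(4ab-\lambda_0^2v^2)$. So comparing off-diagonal entries cannot by itself force $\sin\theta\mu_j=0$. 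A further argument using the diagonal entries would be needed, and you give none.

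The missing idea is that the multiplier is constant, which is step (a) of the paper. Put $G=X+iY$ and $T(Z)=M_1ZM_2^{-1}$. The closing condition says exactly that $T(G(z))=\gamma(z)G(z)$ with $G(z)\neq 0$. Hence $\gamma(z)$ lies in the finite set $\operatorname{Spec}(T)$, and continuity on the connected domain $D$ makes it constant.

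With $\varphi$ constant, your comparison does close. The $v^{-1}$- and $v$-coefficients in the display give $\frac{\sin\theta\mu_1}{\mu_1}=ie^{-i\varphi}\frac{\sin\theta\mu_2}{\mu_2}=-ie^{-i\varphi}\frac{\sin\theta\mu_2}{\mu_2}$, so both sines vanish. This uses only that $v^2$ is nonconstant.

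That would be a legitimate alternative to the paper's route. The paper instead shows that $G,G_z,G_{\bar z}$ are linearly independent, using the conformal, Lagrangian and horizontal structure of the lift. So the relevant eigenspace of $T$ has dimension at least $3$, and the paper then reads off the spectrum $e^{\pm i(\theta\mu_1\pm\theta\mu_2)}$ of $T$. The paper's argument needs no explicit profile formulas; yours needs the explicit $\Theta_2$ and the nonconstancy of $v$.

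Note also that \eqref{eq:theta2-Q2} is the $Q_2$ formula. For $Q_2^*$ you must rederive $\Theta_2$ from the $\mathfrak{su}(1,1)$ reality condition, where the lower-left entry changes sign. The same comparison then runs on $I$, where $v$ has fixed sign and is strictly monotone.
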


\begin{proof}
Set $C=c^2+\epsilon(a^2+b^2)$. The assumptions give
$C>2|ab|$ in both cases. Thus, for every $\lambda\in \mathbb{S}^1$,
\[
\mu_j(\lambda)^2\ge C-2|ab|>0,\qquad j=1,2.
\]
Fix $\lambda_0\in S^1$ and $\theta>0$.
The maximal domain $D$ is $\mathbb C$ in the compact case and
$\Sigma$ in the hyperbolic case; in both cases it is connected
and invariant under translations in the $y$-direction. Put
\[
A_1=A(\lambda_0),\qquad A_2=A(-\epsilon i\lambda_0),\qquad
M_j= \exp\bigl(i\theta A_j\bigr) \quad(j=1,2). 
\]

\medskip
\noindent\textit{(a) Projective closing and a fixed eigenspace.}
Write $F_1=F_{\lambda_0}$, $F_2=F_{-\epsilon i\lambda_0}$, and set
\[
X=F_1F_2^{-1},\qquad
Y=F_1(i\sigma_3)F_2^{-1},\qquad
G=X+iY.
\]
Define the complex-linear isomorphisms
$\Psi_\pm:\mathbb C^4\longrightarrow M_2(\mathbb C)$ by
\[
\begin{aligned}
\Psi_+(w_1,w_2,w_3,w_4)
&=
\begin{pmatrix}
w_1+iw_2 & w_3+iw_4\\
-w_3+iw_4 & w_1-iw_2
\end{pmatrix},\\[1ex]
\Psi_-(w_1,w_2,w_3,w_4)
&=
\begin{pmatrix}
w_1+iw_2 & w_3-iw_4\\
w_3+iw_4 & w_1-iw_2
\end{pmatrix}.
\end{aligned}
\]
Using the respective matrix forms of $SU(2)$ and $SU(1,1)$, the
reconstruction formula in Section~\ref{Equivariant setup} gives
\[
w:=\Psi_\epsilon^{-1}(G),\qquad f^{\lambda_0}=[w],
\]
where $\Psi_\epsilon=\Psi_+$ for $\epsilon=1$ and
$\Psi_\epsilon=\Psi_-$ for $\epsilon=-1$. Moreover, we have
\[
G=F_1(I_2-\sigma_3)F_2^{-1}\ne0.
\]
The translation law from Section~\ref{Equivariant setup} therefore becomes
\begin{equation}
\label{eq:matrix-lift-translation}
G(z+i\theta)=T(G(z)),\qquad
T(Z):=M_1ZM_2^{-1}.
\end{equation}
Here $T$ is a fixed invertible complex-linear operator on
$M_2(\mathbb C)$.

Suppose that $\theta$ is a closing period. Then
$[T(G(z))]=[G(z)]$ in $\mathbb P(M_2(\mathbb C))$, so there is a unique
$\gamma(z)\in\mathbb C^*$ such that
\[
T(G(z))=\gamma(z)G(z).
\]
Locally, choose an entry $G_{rs}$ that does not vanish. Then
\[
    \gamma(z)=\frac{(T(G(z)))_{rs}}{G_{rs}(z)},
\]
so $\gamma$ is continuous. Since
$\gamma(D)\subset\operatorname{Spec}(T)$, the connectedness of
$D$ and finiteness of $\operatorname{Spec}(T)$ imply that
$\gamma\equiv\gamma_0$ for some
$\gamma_0\in\operatorname{Spec}(T)$. Consequently,
\[
G(D)\subset E,\qquad
E:=\ker(T-\gamma_0\operatorname{Id}). 
\]

\medskip
\noindent\textit{(b) A lower bound for the dimension of $E$.}
Differentiating $T(G)=\gamma_0G$ gives
\[
G,\quad G_z,\quad G_{\bar z}\in E.
\]
We claim that these three vectors are complex linearly independent.  We use the notation of \cite{KZ2026,KZ2026agag}. Let $\langle\,\cdot,\cdot\,\rangle$ denote the complex bilinear form on
$\mathbb C^4$ in the $Q_2$ case and on $\mathbb C^4_2$ in the $Q_2^*$
case, and let
\[
    (u,v):=\langle u,\overline v\rangle
\]
be the corresponding Hermitian form. 
Choose a local horizontal lift $\mathfrak f$ of $f^{\lambda_0}$
with $(\mathfrak f,\mathfrak f)=\epsilon$. Since $f^{\lambda_0}$ is conformal and Lagrangian with
induced metric $ds_M^2=2e^u\,dz\,d\bar z$, we have
\[
    (\mathfrak f_z,\mathfrak f_z)
    =
    (\mathfrak f_{\bar z},\mathfrak f_{\bar z})
    =
    e^u,
    \qquad
    (\mathfrak f_z,\mathfrak f_{\bar z})=0.
\]
Moreover, horizontality gives
\[
    (\mathfrak f_z,\mathfrak f)
    =
    (\mathfrak f_{\bar z},\mathfrak f)
    =0.
\]
The Gram matrix of $\mathfrak f,\mathfrak f_z,\mathfrak f_{\bar z}$ is
$\operatorname{diag}(\epsilon,e^u,e^u)$, which is nonsingular.
Hence these vectors are complex linearly independent. 

Since $w=\Psi_\epsilon^{-1}(G)$ and $\mathfrak f$ represent the
same projective immersion, write $w=r\mathfrak f$ locally,
where $r$ is smooth and nowhere zero.
Applying $\Psi_\epsilon^{-1}$ to a relation
$a_1G+a_2G_z+a_3G_{\bar z}=0$ and differentiating
$w=r\mathfrak f$ gives
\[
    (a_1r+a_2r_z+a_3r_{\bar z})\mathfrak f
    +a_2r\,\mathfrak f_z
    +a_3r\,\mathfrak f_{\bar z}=0.
\]
The preceding linear independence and $r\neq0$ imply
$a_1=a_2=a_3=0$. Thus $G,G_z,G_{\bar z}$ are linearly
independent, and their membership in $E$ gives
\begin{equation}
\label{eq:closing-eigenspace-bound}
    \dim_{\mathbb C}E\ge3.
\end{equation}

\medskip
\noindent\textit{(c) The spectrum of the monodromy action.}
Directly from the potential,
\[
\operatorname{tr}A_j=0,\qquad A_j^2=\mu_j^2\id.
\]
Since $\mu_j>0$, each $A_j$ has distinct eigenvalues
$\mu_j,-\mu_j$ and is diagonalizable over $\mathbb C$.
Set $\alpha=\theta\mu_1$, $\beta=\theta\mu_2$. There exist
$P_1,P_2\in GL(2,\mathbb C)$ such that
\[
M_j=P_jD_jP_j^{-1},\qquad
D_1=\operatorname{diag}(e^{i\alpha},e^{-i\alpha}),\quad
D_2=\operatorname{diag}(e^{i\beta},e^{-i\beta}).
\]
Define the complex-linear isomorphism
\[
    \mathcal P:M_2(\mathbb C)\longrightarrow M_2(\mathbb C),
    \qquad
    \mathcal P(W)=P_1WP_2^{-1}.
\]
Then
\[
    \widetilde T:=\mathcal P^{-1}\circ T\circ\mathcal P,
    \qquad
    \widetilde T(W)=D_1WD_2^{-1}.
\]
Thus $T$ and $\widetilde T$ have the same eigenvalues and
corresponding eigenspace dimensions.

For $W=(w_{rs})$, we have
\[
    \widetilde T(W)
    =
    \begin{pmatrix}
        e^{i(\alpha-\beta)}w_{11}
        &
        e^{i(\alpha+\beta)}w_{12}
        \\[1mm]
        e^{-i(\alpha+\beta)}w_{21}
        &
        e^{-i(\alpha-\beta)}w_{22}
    \end{pmatrix}.
\]
Thus the matrix units $E_{11},E_{12},E_{21},E_{22}$ form an eigenbasis
of $\widetilde T$. The eigenvalues of $T$, counted with multiplicity and arranged
by the corresponding matrix entries, are
\[
    \begin{pmatrix}
        e^{i(\alpha-\beta)}
        &
        e^{i(\alpha+\beta)}
        \\[1mm]
        e^{-i(\alpha+\beta)}
        &
        e^{-i(\alpha-\beta)}
    \end{pmatrix}.
\]
By \eqref{eq:closing-eigenspace-bound}, at least three entries of this
array must coincide. Any three positions in a $2\times2$ array contain
a complete row and a complete column. Equality along such a row gives
$e^{2i\beta}=1$, while equality along such a column gives
$e^{2i\alpha}=1$. Therefore
\[
\alpha,\beta\in\pi\mathbb Z.
\]
This proves the necessity in \eqref{eq:closing-period-iff}.

\medskip
\noindent\textit{(d) The converse and the least positive period.}
Conversely, suppose that $\theta\mu_1=m\pi$ and $\theta\mu_2=n\pi$.
Then
\[
M_1=(-1)^m\id,\qquad M_2=(-1)^n\id,
\]
and hence
\[
G(z+i\theta)=(-1)^{m-n}G(z).
\]
The scalar $(-1)^{m-n}$ does not change the projective class,
so $\theta$ is a closing period, proving \eqref{eq:closing-period-iff}. The signs of $M_1$ and $M_2$
need not agree.

Since $\theta,\mu_1,\mu_2>0$, the corresponding integers are positive,
and taking their ratio proves the necessity in
\eqref{eq:closing-rational-iff}. Conversely, if $\mu_2/\mu_1=p/q$,
then $\theta=q\pi/\mu_1$ satisfies the two integer conditions.

If $p,q$ are coprime, every closing period satisfies $n/m=p/q$, so
$m=kq$ and $n=kp$ for some $k\in\mathbb Z_{>0}$. Therefore
$\theta=k\theta_{\min}$, proving \eqref{eq:least-closing-period} and the
claim about all positive periods. For the specific lift $G$ used above,
\begin{equation}
\label{eq:lift-period-sign}
G(z+ik\theta_{\min})=(-1)^{k(q-p)}G(z).
\end{equation}
Thus $G$ is periodic at $\theta_{\min}$ when $p,q$ are both odd, and is
antiperiodic at $\theta_{\min}$ and periodic at $2\theta_{\min}$ when
they have opposite parity. 

\medskip
\noindent\textit{Distribution of the closing parameters.}
Write $\lambda=e^{i\varphi}$ and $t=\cos2\varphi$. Then
\[
\mu_1^2=C+2\epsilon abt,\qquad
\mu_2^2=C-2\epsilon abt.
\]
Consequently,
\[
l(t):=\sqrt{\frac{C-2\epsilon abt}{C+2\epsilon abt}},
\qquad
\frac{d}{dt}l(t)^2
=-\frac{4\epsilon abC}{(C+2\epsilon abt)^2}\ne0.
\]
Thus $l$ is continuous and strictly monotone on $[-1,1]$, with
nondegenerate image
\[
\left[
\sqrt{\frac{C-2|ab|}{C+2|ab|}},\,
\sqrt{\frac{C+2|ab|}{C-2|ab|}}
\right].
\]
By \eqref{eq:closing-rational-iff}, the closing parameter set is exactly
\[
\mathcal C
=\{e^{i\varphi}\in \mathbb{S}^1:l(\cos2\varphi)\in\mathbb Q_{>0}\}.
\]
For each positive rational number $r$, strict monotonicity gives at most
one $t$ with $l(t)=r$, and $\cos2\varphi=t$ has at most four solutions
on $\mathbb{S}^1$. Hence $\mathcal C$ is countable.

Let $U\subset \mathbb{S}^1$ be a nonempty open arc. The function
$\cos2\varphi$ is nonconstant on $U$, and $l$ is injective.
Therefore the continuous function $l(\cos2\varphi)$ is nonconstant on
$U$. Since $U$ is connected, its image is a nondegenerate interval,
which contains both rational and irrational numbers. Thus $U$ contains
both closing and non-closing parameters. This proves that $\mathcal C$
and $\mathbb{S}^1\setminus\mathcal C$ are dense.
Finally, a countable subset of $\mathbb{S}^1$ has zero standard arc-length measure,
so its complement has full arc-length measure.

The closing immersions descend to $D/(i\theta_{\min}\mathbb Z)$,
which is conformally a cylinder when $D=\mathbb C$ and an annulus
when $D=\Sigma$.
\end{proof}

\begin{Remark}
    In particular, $\alpha=\beta=\pi/2$ gives the eigenvalues $1,-1,-1,1$;
both eigenspaces have dimension two, so this case cannot be a closing
period.
\end{Remark}

\section{Absolute total curvature and stability}\label{Absolute total curvature and stability}
 In this section, we prove that, apart from the flat case in \(Q_2\) and under the standing assumption in the \(Q_2^*\) case, the equivariant surfaces have infinite absolute
total curvature. We then investigate their stability properties. It turns out that these surfaces in $Q_{2}$ are unstable, whereas minimal Lagrangian surfaces in $Q_{2}^{*}$ are stable. 

\subsection{Absolute total curvature of the $\R$-equivariant minimal Lagrangian surfaces}

First, recall that in the conformal coordinate $z = x + iy$, the induced metric is given by $ds^{2}_{M} = 2e^{u}dz d\bar{z}$. Then the Laplace operator $\Delta$ on the surface $(M , ds^{2}_{M})$ is given by 
\begin{equation}\label{eq: Laplace}
    \Delta = \frac{4}{2e^{u}}\frac{\partial^{2}}{\partial z \partial\bar{z}} = \frac{1}{2e^{u}} \left(\frac{\partial^{2}}{\partial x^{2}} + \frac{\partial^{2}}{\partial y^{2}} \right). 
\end{equation}
Here $\Delta=-\Delta_H^{(0)}$, where $\Delta_H^{(0)}$ denotes the nonnegative Hodge Laplacian on functions.
The Gaussian curvature can be expressed by the following formula:
\begin{equation*}
    K = -\frac{4}{2e^{u}} \frac{\partial^{2} \operatorname{ln} \sqrt{2e^{u}} }{\partial z \partial\bar{z}} = -e^{-u} u_{z \bar{z}}.
\end{equation*}
Moreover, since the induced metric of the $\R$-equivariant minimal Lagrangian surfaces in $Q_{2}$ and $Q_{2}^{*}$ depends only on $x$, we have $u_{z \bar{z}} = (1/4) u_{xx}$. Hence, the absolute total curvature is given by the following formula:
\begin{equation}\label{eq: total curvature}
    \int_{M} |K| dA  =  \frac{1}{2}\int_{M} |u_{xx}| dx\,dy,
\end{equation}
where $dA$ is the area element with respect to the induced metric. 

In this subsection, \(M\) denotes the maximal equivariant domain,
namely \(\mathbb C\) in the \(Q_2\) case and $\Sigma$ in the \(Q_2^*\) case, or, when the immersion closes in the \(y\)-direction, the corresponding quotient by a closing period.
\begin{Theorem}
Let $\{f^\lambda\}_{\lambda\in \mathbb{S}^1}$ be the associated family of
$\mathbb R$-equivariant minimal Lagrangian immersions into $Q_2$ or $Q_2^*$
constructed from the equivariant potential \eqref{eq: Delaunay potential},
with $a,b\in\mathbb R^*$ and $c\in\mathbb R$. Assume, as in Remarks~\ref{Rmk: flat case}
and~\ref{Rmk: timelike axes}, that
\[
\begin{cases}
c^2+(|a|-|b|)^2>0, & \text{in the $Q_2$ case},\\
c^2>(|a|+|b|)^2, & \text{in the $Q_2^*$ case}.
\end{cases}
\]
For each $\lambda\in\mathbb{S}^1$, let $M$ be the maximal
equivariant domain or, if $f^\lambda$ closes in the $y$-direction,
a quotient of that domain by a $y$-closing period.
Then the absolute total curvature of $f^\lambda$ on $M$ is
infinite:
\[
    \int_{M}|K|\,dA=+\infty .
\]
\end{Theorem}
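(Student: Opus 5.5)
By \eqref{eq: total curvature}, the absolute total curvature equals $\frac12\int_M|u_{xx}|\,dx\,dy$, where $2e^u=\frac12\bigl(v^2+16a^2b^2v^{-2}\bigr)$ depends only on $x$. In every case $M$ is $I\times\mathbb R$ or $I\times(\mathbb R/\theta\mathbb Z)$, with $I=\mathbb R$ in $Q_2$ and $I=(-\kappa_1^2,\kappa_2^2)$ in $Q_2^*$. If $M$ is not a quotient, the $y$-integral is infinite, so it suffices to show $\int_I|u_{xx}|\,dx>0$. For the quotient case, $\int_M|K|\,dA=\frac{\theta}{2}\int_I|u_{xx}|\,dx$, so we need $\int_I|u_{xx}|\,dx=+\infty$. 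I will prove this stronger statement in both ambient spaces.

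\emph{The $Q_2$ case.} By \eqref{eq: solution v}, $v$ is a nonzero multiple of $\operatorname{dn}$. It is real-analytic and periodic, with period $T=2K(k)/\sqrt{W_2}$, and it never vanishes. Hence $u(x)=\log\bigl(\tfrac14(v^2+16a^2b^2v^{-2})\bigr)$ is smooth and $T$-periodic. Under the nondegeneracy assumption $c^2+(|a|-|b|)^2>0$, $v$ is nonconstant: a constant solution of $v''=4Sv-2v^3$ with the first integral \eqref{vx1} forces $v^2=2S$ and $(v')^2=0$, which gives the flat case.

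The step I expect to need care is showing that $u$ itself is nonconstant. The function $h(s)=s+16a^2b^2/s$ in $s=v^2$ has its minimum at $s=4|ab|$. So $u$ could only be constant if $v^2$ were confined to a single level set of $h$, which is at most two values. By continuity, $v^2$ would then be constant, a contradiction. Since $u$ is periodic and nonconstant, $u_{xx}\not\equiv0$, and periodicity gives $\int_{jT}^{(j+1)T}|u_{xx}|\,dx=c_0>0$ for every $j\in\mathbb Z$. Summing over $j$ gives $\int_{\mathbb R}|u_{xx}|\,dx=+\infty$.

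\emph{The $Q_2^*$ case.} By Lemma~\ref{lem:real-sc}, $I$ is bounded. At one endpoint $v\to0$ and at the other $|v|\to\infty$, in each case through a simple zero or pole of $\operatorname{sc}$. Near an endpoint $x_*$, $|v|\sim C|x-x_*|^{\pm1}$. In either case $v^2+16a^2b^2v^{-2}\sim C'|x-x_*|^{-2}$, so $u=-2\log|x-x_*|+O(1)$. Since $\operatorname{sc}$ has a simple zero or pole, the remainder is analytic near $x_*$ after factoring the singular part.

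From $(\log|x-x_*|)''=-(x-x_*)^{-2}$ we get $u_{xx}=2(x-x_*)^{-2}+O(1)$ near $x_*$. This is not integrable at $x_*$, so $\int_I|u_{xx}|\,dx=+\infty$. The only thing to check is that the $O(1)$ remainder really has bounded second derivative. This follows by writing $v^2+16a^2b^2v^{-2}=(x-x_*)^{-2}g(x)$ with $g$ analytic and $g(x_*)>0$, using the Laurent or Taylor expansion of $\operatorname{sc}$ at $2mK$ or $(2m+1)K$.

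In both cases, multiplying by the $y$-length, which is either $\theta$ or infinite, gives $\int_M|K|\,dA=+\infty$.
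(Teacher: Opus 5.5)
Your proposal is correct. In the $Q_2$ case it follows the paper's argument: $v$ is nonconstant and periodic, the level-set argument for $s+16a^2b^2/s$ shows $u$ is nonconstant, hence $u_{xx}\not\equiv0$, and the integrals over the periods add up. You also indicate why $v$ is nonconstant outside the flat case, which the paper only asserts.

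In the $Q_2^*$ case your route is different.

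\emph{The paper's route.} It works with $u_x$ at both ends of $I$. From the profile equation it gets $|v'|\to4|ab|$ where $v\to0$, and $|v'|\sim v^2$ where $|v|\to\infty$, so $|u_x|\to\infty$ at each endpoint. It then uses the strict monotonicity and fixed sign of $v$ to show $u_x\to-\infty$ on the left and $u_x\to+\infty$ on the right. This makes the lower bound $\int|u_{xx}|\,dx\ge|u_x(\kappa_2^2-\delta_2)-u_x(-\kappa_1^2+\delta_1)|$ diverge.

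\emph{Your route.} You use that the endpoints are a simple zero and a simple pole of $\operatorname{sc}$. This lets you write $v^2+16a^2b^2v^{-2}=(x-x_*)^{-2}g(x)$ with $g$ analytic and $g(x_*)>0$. That gives the pointwise expansion $u_{xx}=2(x-x_*)^{-2}+O(1)$, which is non-integrable already at a single endpoint.

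\emph{What each buys.} Your version needs no sign bookkeeping and only one end. It also gives a sharper local picture: one checks $g(x_*)=1$ at both ends, so $K\to-2$ while the area density behaves like $\tfrac12(x-x_*)^{-2}$. This makes precise the paper's remark that in $Q_2^*$ the divergence comes from the conformal factor near the ends rather than from large curvature. The paper's version uses only the first-order ODE near the endpoints and the fundamental theorem of calculus, without the local Laurent/Taylor structure of the elliptic function.
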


\begin{proof}
Although the induced metrics in $Q_{2}$ and $Q_{2}^{*}$ have the
same form, the corresponding functions $v=v(x)$ satisfy different
ordinary differential equations. Therefore, we discuss the two cases
separately.

\textbf{For the case of $Q_{2}^{*}$}, let 
\[
    I=(-\kappa_{1}^{2},\kappa_{2}^{2})
\]
be the maximal interval on which $v$ is finite and nonzero.
For $\delta_{1},\delta_{2}>0$, set
\[
    I_{\delta_{1},\delta_{2}}
    =
    (-\kappa_{1}^{2}+\delta_{1},
      \kappa_{2}^{2}-\delta_{2}).
\]
Choose any interval $\Theta$ in the $y$-direction of finite positive
length $|\Theta|>0$. If the immersion closes in the $y$-direction,
$\Theta$ may be chosen to be a fundamental interval of the quotient;
otherwise, any bounded interval is sufficient. By \eqref{eq: total curvature},
\[
\begin{aligned}
    \int_{I_{\delta_{1},\delta_{2}}\times \Theta}
    |K|\,dA
    &=
    \frac{|\Theta|}{2}
    \int_{-\kappa_{1}^{2}+\delta_{1}}
         ^{\kappa_{2}^{2}-\delta_{2}}
    |u_{xx}|\,dx  \\
    &\ge
    \frac{|\Theta|}{2}
    \left|
        u_x(\kappa_{2}^{2}-\delta_{2})
        -
        u_x(-\kappa_{1}^{2}+\delta_{1})
    \right|.
\end{aligned}
\]
 We shall show that
\[
    u_x(x)\to-\infty
    \quad\text{at the left endpoint},\qquad
    u_x(x)\to+\infty
    \quad\text{at the right endpoint}.
\]
    Differentiating both sides of the conformal factor $2e^{u} = (1/2) \left(v(x)^{2}+16|ab|^{2}\,v(x)^{-2}\right)$ with respect to $x$, we have 
    \begin{equation}\label{eq: ux}
        u_{x} = \frac{2 v'}{v} \frac{v^{4} - 16|ab|^{2}}{v^{4} + 16|ab|^{2}}. 
    \end{equation}
Recall that \(v\) satisfies the equation \eqref{vx2} and we have 
\begin{equation*}
    c^{2} - a^{2} - b^{2} > 2|ab|. 
\end{equation*}
With $Y_{1}, Y_{2}$ defined above, the differential equation can be written as
\begin{equation*}
    (v')^{2} = (v^{2} + Y_{1})(v^{2} + Y_{2}) > 0.  
\end{equation*}
Therefore, 
\begin{equation*}
    v'(x) \ne 0 \qquad \text{for all}~x\in I. 
\end{equation*}
Hence $v$ is strictly monotone on the entire maximal interval $I$.
Moreover, by the real Jacobi elliptic representation \eqref{eq: solution v2}, the two
endpoints of the maximal interval \(I\) are finite: one corresponds to a zero and the other to an adjacent pole of the $\operatorname{sc}$-function. 
Thus, at a finite endpoint, either \(v(x)\to 0\)
or \(|v(x)|\to\infty\).

    \smallskip
\textrm{Case 1: \(v(x)\to 0\).} First of all, we have 
\begin{equation*}
    \frac{v^{4} - 16|ab|^{2}}{v^{4} + 16|ab|^{2}} \longrightarrow -1.
\end{equation*}
On the other hand, the equation \eqref{vx2} gives
\[
    (v')^2\to 16a^2b^2 >0.
\]
Therefore, we have $|v'| \rightarrow 4|ab| \ne 0$.  It follows that 
\begin{equation*}
    u_{x} \sim -\frac{2v'}{v}, 
\end{equation*}
and $|u_{x}| \rightarrow \infty$. 

\textrm{Case 2: \(|v(x)|\to\infty\).} First of all, we have 
\begin{equation*}
    \frac{v^{4} - 16|ab|^{2}}{v^{4} + 16|ab|^{2}} \longrightarrow 1.
\end{equation*}
On the other hand, expanding \eqref{vx2}, the dominant term as \(|v|\to\infty\) is
\(v^{4}\), so
\[
    (v')^{2}
    = v^{4} - 4(a^{2}+b^{2}-c^{2})\,v^{2} + 16a^{2}b^{2} = v^{4} + O(v^{2})
\]
for all \(|v|\) sufficiently large; hence \(|v'| \sim v^{2}\). It follows that 
\begin{equation*}
    u_{x} \sim \frac{2v'}{v}, 
\end{equation*}
and $|u_{x}|\sim 2|v| \rightarrow \infty$. 

Finally, since $v$ never vanishes on the connected interval $I$, it has a fixed sign. Replacing $v$ by $-v$ if necessary, we may assume $v>0$ on $I$. Thus, the sign of $u_{x}$ has the following relation with that of $v'$. 
\begin{equation*}
    \operatorname{sgn} (u_{x}) = -\operatorname{sgn}(v'),~\text{as}~v(x) \rightarrow 0; \quad \operatorname{sgn} (u_{x}) = \operatorname{sgn}(v'),~\text{as}~|v(x)| \rightarrow \infty.
\end{equation*}
Near the left endpoint, if $v \rightarrow 0$, then $v' > 0$; if $v \rightarrow \infty$, then $v' < 0$.
In both cases, we have $u_{x} < 0$. Similarly, we can show that $u_{x} > 0$ near the right endpoint. 
It follows that 
\[
    \left|
        u_x(\kappa_{2}^{2}-\delta_{2})
        -
        u_x(-\kappa_{1}^{2}+\delta_{1})
    \right|
    \longrightarrow+\infty
\]
as $\delta_{1},\delta_{2}\to0^{+}$. Consequently,
\[
    \int_M |K|\,dA=+\infty.
\]
Therefore, the absolute total curvature is infinite in the $Q_{2}^{*}$ case.

\textbf{For the case of $Q_{2}$}, the function $v(x)$ is of the form \eqref{eq: solution v}. 
By the Jacobi-\(\operatorname{dn}\) representation \eqref{eq: solution v},
and excluding the flat case, \(v\) is a bounded
nonconstant periodic function. Consequently, \(u\) is periodic.
  
 We claim that $u$ is nonconstant. Indeed, suppose that $u$ is constant. Then $v^{2} + 16|ab|^{2}\,v(x)^{-2}$ is constant. Let $s = v^{2} > 0$. Then it follows that 
\begin{equation*}
    s + \frac{16|ab|^{2}}{s} = C_{0}, 
\end{equation*}
for some constant $C_{0}$. Hence $s(x)$ takes values among the roots of the quadratic equation 
\begin{equation*}
    s^{2} - C_{0} s + 16|ab|^{2} = 0, 
\end{equation*}
which has at most two roots. Since $s(x)$ is continuous, it must take only one of these values and is therefore constant. Thus $v^{2}$ is constant. Since $v$ never vanishes, $v$ itself is constant, which is a contradiction. Therefore, $u$ is nonconstant. 

Suppose that $u_{xx} \equiv 0$. Then $u(x) = Ax + B$ for some constants $A$ and $B$. The periodicity of $u$ forces $A = 0$, and hence $u$ is constant, again a contradiction. Therefore, $u_{xx}$ cannot vanish identically. Let $T_{0}>0$ be a period of $u$. Then $u_{xx}$ is $T_{0}$-periodic. Since $u_{xx}$ cannot vanish identically, it follows that 
\[
    C_{T_{0}}:=
    \int_{0}^{T_{0}}|u_{xx}|\,dx>0.
\]
For every $N\in\mathbb N$, periodicity gives
\[
    \int_{0}^{N T_{0}}|u_{xx}|\,dx
    =
    \sum_{j=0}^{N-1}
    \int_{jT_{0}}^{(j+1)T_{0}}|u_{xx}|\,dx
    =
    N C_{T_{0}}.
\]
Thus
\[
    \int_{0}^{+\infty}|u_{xx}|\,dx=+\infty.
\]

Again, choose an interval $\Theta$ in the $y$-direction with $0<|\Theta|<\infty$. For a closing immersion, $\Theta$ may be chosen as a fundamental interval, while for a non-closing immersion any bounded interval may be used. Then
\[
    \int_M |K|\,dA
    \ge
    \int_{[0,+\infty)\times \Theta}|K|\,dA
    =
    \frac{|\Theta|}{2}
    \int_{0}^{+\infty}|u_{xx}|\,dx
    =+\infty.
\]
Therefore, the absolute total curvature is infinite also in the $Q_{2}$ case.
\end{proof}

\begin{Remark}
    In the excluded case \(c=0\) and \(|a|=|b|\), the conformal factor is constant and \(K\equiv0\). Hence its absolute total curvature is zero rather than infinite.
\end{Remark}

\subsection{Stability of the equivariant surfaces}
In this subsection, stability means stability in the usual
Riemannian sense with respect to compactly supported normal
variations.

\begin{Corollary}
    Let $M$ be a minimal Lagrangian surface in $Q_{2}^{*}$. Then $M$ is stable.     
\end{Corollary}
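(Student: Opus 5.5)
The plan is to use the standard Chen--Leung--Nagano/Oh second variation formula for minimal Lagrangian submanifolds in Kähler--Einstein manifolds, in its version for arbitrary (not only Hamiltonian) compactly supported normal variations. For a minimal Lagrangian immersion $f:M\to N$ and a compactly supported normal field $V$, the map $V\mapsto \alpha_V$ identifies normal fields with $1$-forms on $M$. Under this identification the second variation of area becomes
\[
    \mathscr{A}''_V(0)=\int_M\Bigl(|d\alpha_V|^2+|\delta\alpha_V|^2-\operatorname{Ric}_N(JV,JV)\Bigr)\,dA ,
\]
which is the Bochner-type identity underlying \eqref{eq: second variation formula}. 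When $N$ is Kähler--Einstein with $\operatorname{Ric}_N=\kappa g$, the last term is $\kappa|V|^2=\kappa|\alpha_V|^2$. I will cite this formula from \cite{CLN2013,Le1989,Oh1990} rather than rederive it. It is the same computation that, restricted to $\alpha_V=df_V$, gives \eqref{eq: second variation formula}.

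The key steps, in order, are as follows.
\begin{itemize}
\item[(i)] Recall the normalization of Section~\ref{Equivariant setup}, namely $\operatorname{Ric}_{Q_2^*}=-4g_{Q_2^*}$, so $\kappa=-4<0$.
\item[(ii)] Apply the formula to an arbitrary compactly supported normal variation $V$ of $M\subset Q_2^*$. This gives
\[
    \mathscr{A}''_V(0)=\int_M\bigl(|d\alpha_V|^2+|\delta\alpha_V|^2+4|\alpha_V|^2\bigr)\,dA\ \ge\ 0 .
\]
\item[(iii)] Note that equality holds only when $\alpha_V\equiv0$, that is, $V\equiv0$, since $\alpha_V(X)=\omega(V,df(X))=g(JV,df(X))$ and $J$ maps the normal bundle isomorphically onto the tangent bundle.
\end{itemize}
Hence $M$ is stable, and in fact strictly stable. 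Nothing in this argument uses equivariance, completeness, or the timelike-axis assumption of Remark~\ref{Rmk: timelike axes}. This matches the paper's statement that the result holds for every minimal Lagrangian surface in $Q_2^*$.

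The only point needing care is the main obstacle: making sure the cited general formula covers all compactly supported normal variations and not only Lagrangian or Hamiltonian ones. The identity $\int\langle\Delta_H\alpha,\alpha\rangle=\int(|d\alpha|^2+|\delta\alpha|^2)$ holds for any compactly supported $1$-form by integration by parts, with no boundary terms. So I would either quote the statement for non-positive Ricci curvature directly from \cite{CLN2013,Oh1990}, or indicate briefly that the Weitzenböck/Gauss-equation computation yields $\Delta_H-\operatorname{Ric}_N$ on $\alpha_V$ for general $V$ when $f$ is minimal Lagrangian.
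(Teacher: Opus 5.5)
Your proposal is correct and follows the paper's proof: you apply Oh's second variation formula to an arbitrary compactly supported normal variation and use $\operatorname{Ric}_{Q_2^*}=-4g_{Q_2^*}$ to make the integrand $|d\alpha_V|^2+|\delta\alpha_V|^2+4|\alpha_V|^2$ manifestly nonnegative. Two inessential differences: your observation in step (iii), that equality forces $V\equiv0$ and hence strict stability, is correct and slightly strengthens the paper's conclusion; and you integrate directly on $M$, whereas the paper first chooses a compact domain $U$ containing $\operatorname{supp}V$.
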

\begin{proof}
Let $V\in\Gamma_c(T^\perp M)$ be an arbitrary compactly supported normal variational vector field, and let
$\alpha_V=f^*(V\rfloor\omega)$ be the corresponding
one-form. Choose a compact smooth domain $U\subset M$ such that
\[
    \operatorname{supp}V\subset\operatorname{int}U.
\]
The second variation may be computed on $U$, using a variation
that is fixed near $\partial U$.

Since $\operatorname{Ric}_{Q_2^*}=-4g_{Q_{2}^{*}}$, the second variation
formula of Oh \cite[Theorem~3.5]{Oh1990} gives
\begin{align*}
    \mathscr A_V''(0)
    &=
    \int_U
    \left(
        \langle\Delta_H\alpha_V,\alpha_V\rangle
        -
        \operatorname{Ric}_{Q_2^*}(V,V)
    \right)dA\\
    &=
    \int_U
    \left(
        |d\alpha_V|^2
        +
        |\delta\alpha_V|^2
        +
        4|\alpha_V|^2
    \right)dA
    \geq 0,
\end{align*}
where $\Delta_H=d\delta+\delta d$ and $|\alpha_V|=|V|$.
The integration by parts produces no boundary term because
$\alpha_V$ vanishes near $\partial U$.
Since $V$ was an arbitrary compactly supported normal variation,
$M$ is stable.
\end{proof}

\begin{Corollary}
Let $f^\lambda:M\to Q_2$ be an $\mathbb R$-equivariant
minimal Lagrangian immersion constructed from the equivariant
potential \eqref{eq: Delaunay potential}, where $M$ is either
the maximal equivariant domain $\mathbb C$ or a $y$-closing
quotient $\mathbb C/(iL\mathbb Z)$, $L>0$.
Then $f^\lambda$ is unstable.
\end{Corollary}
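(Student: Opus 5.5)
The plan is to use the introduction's own indication: completeness from Proposition~\ref{prop: complete} combined with the conformal-type obstruction of Miyaoka and Ueki \cite{MU2014}. Independently of that citation, I will exhibit an explicit destabilizing family of compactly supported normal variations. The normal bundle of a Lagrangian immersion is identified with $TM$ via $V\mapsto \alpha_V$. For a normal field $V$ with $\alpha_V=df_V$ and $\operatorname{supp} f_V$ compact, formula \eqref{eq: second variation formula} with $\kappa=4$ gives
\[
\mathscr A''_V(0)=\int_M\bigl((\Delta f)^2-4|df|^2\bigr)\,dA .
\]
So it suffices to find compactly supported smooth $f$ with $\int(\Delta f)^2\,dA<4\int|df|^2\,dA$.

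The key observation is conformal invariance in dimension two. Write the metric as $2e^u|dz|^2$. Then $|df|^2\,dA=(f_x^2+f_y^2)\,dx\,dy$ is independent of $u$. Likewise $(\Delta f)^2\,dA=(2e^u)^{-1}(f_{xx}+f_{yy})^2\,dx\,dy$, and by the AM--GM bound in the proof of Proposition~\ref{prop: complete} we have $2e^u\ge 4|ab|$. Hence
\[
\int(\Delta f)^2\,dA\le \frac{1}{4|ab|}\int(\Delta_0 f)^2\,dx\,dy ,
\]
where $\Delta_0$ is the Euclidean Laplacian. Now take $f_R(x,y)=\phi(x/R)\psi(y)$, choosing $\psi\equiv 1$ when $M=\mathbb C/(iL\mathbb Z)$, and $\psi(y)=\phi(y/R)$ in the case $M=\mathbb C$. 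In the cylinder case, $\int|df_R|^2$ scales like $R^{-1}\cdot L\int\phi'^2$, while $\int(\Delta_0 f_R)^2$ scales like $R^{-3}$. For large $R$ the ratio is therefore $O(R^{-2})$, which is below $4$, giving $\mathscr A''<0$. In the plane case, $\int|df_R|^2$ is scale invariant and positive, while $\int(\Delta_0 f_R)^2=O(R^{-2})$, so the same conclusion holds.

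It remains to check that the normal field $V$ defined by $\alpha_V=df_R$, i.e.\ $V=J\,df(\nabla f_R)$ up to sign, is smooth and compactly supported. This holds because $f_R$ is compactly supported. One should also note that $\operatorname{Ric}(V,V)=4|V|^2$ and $|\alpha_V|=|V|$, so Oh's formula applies exactly as in the $Q_2^*$ corollary, now with the sign of the curvature term reversed.

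The main subtlety is that the second variation formula of Oh requires the variation to be Hamiltonian, or at least Lagrangian. This is fine because any compactly supported normal field is admissible for ordinary stability, and exhibiting a single negative direction suffices. I would also double-check that the cutoff construction respects the quotient in the cylinder case: $f_R$ is independent of $y$ there, so it descends. Beyond that, the argument reduces to the uniform lower bound on the conformal factor, which is already available.
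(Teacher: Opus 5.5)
Your argument is correct, but it is not the paper's route. The paper proves the corollary by citation alone. It combines completeness (Proposition~\ref{prop: complete}) with the observation that $M$ is conformally $\mathbb C$ or $\mathbb C^*$. It then invokes the conformal-type obstruction of Miyaoka and Ueki \cite[Theorem~2]{MU2014} for complete stable minimal Lagrangian surfaces in K\"ahler manifolds with $\operatorname{Ric}\ge\kappa g$, $\kappa>0$. No test variation is ever written down.

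You instead build explicit destabilizing directions from Oh's formula, using three ingredients:
\begin{itemize}
\item the conformal invariance of $\int|df|^2\,dA$;
\item the uniform bound $2e^u\ge 4|ab|$, which gives $\int(\Delta f)^2\,dA\le \frac{1}{4|ab|}\int(\Delta_0 f)^2\,dx\,dy$;
\item the rescaled cutoffs $f_R$.
\end{itemize}
Your scaling computations are right in both cases: the ratio is $O(R^{-2})$, with $\int|df_R|^2\,dA$ of order $R^{-1}$ on the cylinder and scale invariant on the plane.

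What your route buys:
\begin{itemize}
\item It is self-contained and needs neither the external theorem nor completeness as such.
\item It proves more: instability already within Hamiltonian variations with compactly supported potential, which implies ordinary instability.
\item In the cylinder case it is essentially the mechanism of the instability half of Theorem~\ref{thm:direct-hamiltonian-widths}, namely $y$-independent potentials together with $\chi_1\ge 4|ab|$. That case could therefore also be read off from that theorem by taking $\ell>\pi/(4\sqrt{|ab|})$.
\end{itemize}
What the paper's route buys is brevity and robustness. It uses only completeness, conformal type and positive Ricci curvature, not the explicit lower bound on the conformal factor.

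One minor point: your worry that Oh's formula requires Hamiltonian or Lagrangian variations is unfounded. The formula holds for arbitrary compactly supported normal fields, and the paper applies it that way in the $Q_2^*$ corollary. Since your $V$ is Hamiltonian anyway, this does not affect your argument.
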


\begin{proof}
On the maximal equivariant domain, $M$ is conformally equivalent
to $\mathbb C$. If the immersion closes in the $y$-direction
with period $L>0$, the map
\[
    [z]\longmapsto \exp\left(\frac{2\pi z}{L}\right)
\]
identifies $\mathbb C/(iL\mathbb Z)$ biholomorphically with
$\mathbb C^*$.
Thus, in either case, $M$ is an oriented Riemann surface of
finite topological type: it is conformally equivalent to
$\mathbb{S}^{2}\setminus\{\infty\}$ or $\mathbb{S}^{2}\setminus\{0,\infty\}$.
Moreover, $M$ is complete by
Proposition~\ref{prop: complete}.

Suppose, to the contrary, that $M$ is stable.
Since $Q_2$ is K\"ahler and $\operatorname{Ric}_{Q_2}=4g_{Q_{2}}$, the theorem of Miyaoka and Ueki
\cite[Theorem~2]{MU2014} applies.

If $M$ is conformally equivalent to $\mathbb C$, case~(2)
of that theorem requires that there be no constant $\kappa>0$
such that $\operatorname{Ric}_{Q_2}\geq\kappa g_{Q_{2}}$. This contradicts $\operatorname{Ric}_{Q_2}=4g_{Q_{2}}$.

If $M$ is conformally equivalent to
$\mathbb C^*=\mathbb{S}^{2}\setminus\{0,\infty\}$, its conformal type
corresponds to $s=2$ and $l=0$, which is excluded by the same
theorem.

Therefore neither conformal type can occur for a stable
minimal Lagrangian surface in $Q_2$, and the conclusion follows.
\end{proof}

\section{Hamiltonian stability and instability of compact cylindrical domains}
\label{sec: Hamiltonian stability}
In this section, we study Hamiltonian stability and instability of
compact cylindrical domains of the $y$-closing surfaces in $Q_2$.
Working directly with the Hamiltonian second variation and the sharp
Poincar\'e inequality on an interval, we obtain explicit sufficient
conditions on the width for stability and instability. We also show
that, for a fixed left endpoint and closing period, the stable widths
form an interval with a finite geometric critical width, and determine
this critical width explicitly in the flat case.

Let $f^{\lambda_0}:M_L=\mathbb C/(iL\mathbb Z)\rightarrow Q_2$ 
be a $y$-closing $\mathbb R$-equivariant minimal Lagrangian
immersion constructed from an equivariant potential with parameters
$a,b\in\mathbb R^*$ and $c\in\mathbb R$, where $L>0$ is a closing
period. The flat case is allowed in this section whenever the
immersion closes in the $y$-direction. Set
\[
    S:=a^2+b^2+c^2, \qquad \chi(x):=\frac12\left(v(x)^2+16|ab|^2v(x)^{-2}\right).
\]
Then equation \eqref{vx1} can be rewritten as
\begin{equation*}
    (v')^2=-v^4+4Sv^2-16|ab|^2.
\end{equation*}
Dividing this equation by $2v^2$ and comparing with the
definition of $\chi$, we obtain
\[
    \chi(x)
    =
    2S-\frac12\left(\frac{v'(x)}{v(x)}\right)^2.
\]
Together with the AM--GM inequality, this yields
\begin{equation}\label{eq:direct-chi-bounds}
    4|ab|\leq\chi(x)\leq 2S.
\end{equation}

Fix an arbitrary left endpoint $x_1\in\mathbb R$, and write
\[
    s:=x-x_1,
    \qquad
    \chi_1(s):=\chi(x_1+s).
\]
We consider
\[
    \Omega_{\ell,L}:=[0,\ell]\times(\mathbb R/L\mathbb Z),
    \qquad \ell>0,
\]
which corresponds to $[x_1,x_1+\ell]\times(\mathbb R/L\mathbb Z)$
in the original coordinates. The induced metric and area element on $\Omega_{\ell,L}$ are
\[
    ds^{2}_{M_{L}}=\chi_1(s)(ds^2+dy^2),
    \qquad
    dA=\chi_1(s)\,ds\,dy.
\]
Here $\ell$ is the width in the specified conformal coordinate,
rather than an intrinsic distance.

As in Section~\ref{Equivariant setup}, we consider Hamiltonian variations that are fixed
in a neighbourhood of $\partial\Omega_{\ell,L}$. Write $h=f_V\in C^\infty(\Omega_{\ell,L};\mathbb R)$ for the Hamiltonian potential. Then the admissibility condition is
\begin{equation}\label{eq:direct-admissible}
    \alpha_V=dh,
    \qquad
    \operatorname{supp}(dh)\Subset\Omega_{\ell,L}^{\circ}.
\end{equation}
In the coordinate expressions below, $h$ is understood to be
$L$-periodic in $y$. Thus $h$ is constant near each boundary circle, although the two
constants need not agree. Since the immersion is Lagrangian, the
bundle map
\[
    V\longmapsto \alpha_V=f^*(V \rfloor \omega)
\]
identifies the normal bundle with the cotangent bundle. Hence every
such exact one-form $dh$ determines a unique Hamiltonian normal field
$V$, with
\[
    \operatorname{supp}V=\operatorname{supp}(dh)
    \Subset\Omega_{\ell,L}^{\circ}.
\]

For an admissible $h$, let $\mathscr Q_\ell[h]:=\mathscr A_V''(0)$. 
Since the immersion, $x_1$, and $L$ are fixed, we omit them from the notation.
Hamiltonian stability means $\mathscr Q_\ell[h]\geq0$ for every
admissible $h$, while instability means that $\mathscr Q_\ell[h]<0$
for at least one such $h$. Put
\[
    \nabla_0h:=(h_s,h_y),
    \qquad
    \Delta_0h:=h_{ss}+h_{yy}.
\]
Since
\[
    \Delta_H h= -\Delta h =-\chi_1^{-1}\Delta_0h,
    \qquad
    |dh|^2=\chi_1^{-1}|\nabla_0h|^2,
\]
the second variation formula \eqref{eq: second variation formula} and
$\operatorname{Ric}_{Q_2}=4g_{Q_{2}}$ give
\begin{equation}\label{eq:direct-hamiltonian-variation}
    \mathscr Q_\ell[h]
    =\int_{\Omega_{\ell,L}}
      \left((\Delta_H h)^2-4|dh|^2\right)\,dA 
    =\int_0^\ell\int_0^L
      \left(
          \frac{(\Delta_0h)^2}{\chi_1(s)}
          -4|\nabla_0h|^2
      \right)\,dy\,ds.
\end{equation}

\medskip
\noindent\emph{The sharp interval Poincar\'e inequality.}
Consider the Dirichlet eigenvalue problem on the Euclidean interval $(0,\ell)$:
\[
    -q''=\rho q,
    \qquad
    q(0)=q(\ell)=0.
\]
Its eigenvalues are
\[
    \rho_n=\frac{n^2\pi^2}{\ell^2},
    \qquad n\geq1,
\]
with corresponding eigenfunctions $\sin(n\pi s/\ell)$.
Thus the first Dirichlet eigenvalue is $\pi^2/\ell^2$.
By its Rayleigh variational characterization \cite{C1984},
\begin{equation}\label{eq:interval-rayleigh}
    \inf_{\substack{p\in C_c^\infty(0,\ell)\\p\neq0}}
    \frac{\displaystyle\int_0^\ell|p'|^2\,ds}
         {\displaystyle\int_0^\ell|p|^2\,ds}
    =
    \frac{\pi^2}{\ell^2}.
\end{equation}
Consequently,
\begin{equation}\label{eq:sharp-interval-poincare}
    \int_0^\ell|p'|^2\,ds
    \geq
    \frac{\pi^2}{\ell^2}\int_0^\ell|p|^2\,ds,
    \qquad p\in C_c^\infty(0,\ell).
\end{equation}
The constant is optimal: any larger constant valid for every
$p\in C_c^\infty(0,\ell)$ would contradict
\eqref{eq:interval-rayleigh}.

\begin{Theorem}[Sufficient width conditions]
\label{thm:direct-hamiltonian-widths}
With the notation above, $\Omega_{\ell,L}$ is Hamiltonian stable
with respect to variations fixed near its boundary whenever
\begin{equation}\label{eq:direct-stable-width}
    0<\ell\leq\frac{\pi}{2\sqrt{2S}}.
\end{equation}
It is Hamiltonian unstable with respect to the same class of
variations whenever
\begin{equation}\label{eq:direct-unstable-width}
    \ell>\frac{\pi}{4\sqrt{|ab|}}.
\end{equation}
These sufficient conditions hold for every left endpoint $x_1$
and every closing period $L>0$.
\end{Theorem}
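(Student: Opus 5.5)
The plan is to work directly with the coordinate expression \eqref{eq:direct-hamiltonian-variation} of $\mathscr Q_\ell[h]$. The only geometric input will be the two-sided bound \eqref{eq:direct-chi-bounds}, $4|ab|\le\chi_1\le 2S$. The upper bound gives a lower bound for $\mathscr Q_\ell$, which settles stability, and the lower bound gives an upper bound, which settles instability. In both directions the key analytic tool is the sharp interval Poincar\'e inequality \eqref{eq:sharp-interval-poincare} and its Rayleigh characterization \eqref{eq:interval-rayleigh}. Throughout I use the admissibility condition \eqref{eq:direct-admissible}: all derivatives of $h$ are compactly supported in $s\in(0,\ell)$, while $h$ itself need not be.

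\emph{Stability.} Since $\chi_1\le 2S$, we have
\[
\mathscr Q_\ell[h]\ge \frac1{2S}\int\!\!\int\Bigl((\Delta_0h)^2-8S|\nabla_0h|^2\Bigr)\,dy\,ds .
\]
It therefore suffices to show that $\int\!\int(\Delta_0h)^2\ge \frac{\pi^2}{\ell^2}\int\!\int|\nabla_0h|^2$ for every admissible $h$, because $\pi^2/\ell^2\ge 8S$ exactly when $\ell\le\pi/(2\sqrt{2S})$.

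To prove this I expand $h$ in a Fourier series in $y$, writing $h=\sum_k h_k(s)e^{2\pi iky/L}$ with $\omega_k=2\pi k/L$. By Parseval the inequality splits into one inequality per mode. For $k\ne0$ the coefficient $h_k$ is compactly supported in $(0,\ell)$. For $k=0$ only $h_0'$ is compactly supported, but only $h_0'$ and $h_0''$ enter that mode.

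For each mode, integration by parts with no boundary terms (everything relevant vanishes near the endpoints) gives
\[
\int|h_k''-\omega_k^2h_k|^2=\int\bigl(|h_k''|^2+2\omega_k^2|h_k'|^2+\omega_k^4|h_k|^2\bigr).
\]
Apply \eqref{eq:sharp-interval-poincare} to $p=h_k'$ to get $\int|h_k''|^2\ge\frac{\pi^2}{\ell^2}\int|h_k'|^2$. When $k\ne0$, apply it also to $p=h_k$ to get $\omega_k^2\int|h_k'|^2\ge\omega_k^2\frac{\pi^2}{\ell^2}\int|h_k|^2$. Dropping the remaining nonnegative terms, these two estimates give the lower bound $\frac{\pi^2}{\ell^2}\int(|h_k'|^2+\omega_k^2|h_k|^2)$, which is the required per-mode inequality. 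Since \eqref{eq:sharp-interval-poincare} is stated for real $p$, I apply it to the real and imaginary parts of $h_k$ separately. Summing over $k$ yields $\mathscr Q_\ell[h]\ge0$.

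\emph{Instability.} Since $\chi_1\ge4|ab|$, we have
\[
\mathscr Q_\ell[h]\le\int\!\!\int\Bigl(\frac{(\Delta_0h)^2}{4|ab|}-4|\nabla_0h|^2\Bigr)\,dy\,ds .
\]
I take $y$-independent test functions $h(s)=\int_0^s p(t)\,dt$ with $p\in C_c^\infty(0,\ell)$ real and nonzero. Such $h$ is smooth and constant near each boundary circle (with possibly different constants), so it is admissible. For it,
\[
\mathscr Q_\ell[h]\le \frac{L}{4|ab|}\Bigl(\int_0^\ell|p'|^2-16|ab|\int_0^\ell|p|^2\Bigr).
\]
If $\ell>\pi/(4\sqrt{|ab|})$, then $\pi^2/\ell^2<16|ab|$. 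By the infimum characterization \eqref{eq:interval-rayleigh} there is a $p$ whose Rayleigh quotient is below $16|ab|$, and this $p$ makes the right-hand side negative. Neither argument depends on $x_1$ or $L$, which gives the uniformity claimed in the statement.

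The main point of care is the stability step. It requires handling the zero mode, whose potential $h_0$ is not compactly supported, and checking that every integration by parts produces no boundary terms. I handle this by always applying the Poincar\'e inequality to derivatives of $h$, which do have compact support, and applying it to $h_k$ itself only when $k\ne0$.
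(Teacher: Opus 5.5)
Your proposal is correct and follows the paper's proof. Stability uses $\chi_1\le 2S$ together with the sharp interval Poincar\'e inequality applied to derivatives of $h$, and instability uses $\chi_1\ge 4|ab|$ with $y$-independent potentials $h=\int p$ and the Rayleigh characterization. Your per-mode Fourier computation is just the mode-by-mode form of the paper's identity $\int(\Delta_0h)^2=\int(h_{ss}^2+2h_{sy}^2+h_{yy}^2)$, after which the paper applies Poincar\'e directly to $h_s(\cdot,y)$ and $h_y(\cdot,y)$; since these are compactly supported, the paper never needs the zero-mode bookkeeping you had to handle.
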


\begin{proof}
\emph{Stability.}
Let $h$ be an arbitrary admissible potential. Since $dh$ vanishes
near the boundary, both $h_s$ and $h_y$ vanish near $s=0,\ell$.
Integration by parts in $s$ and then in $y$ yields
\begin{align*}
    \int_0^\ell\int_0^L h_{ss}h_{yy}\,dy\,ds
    &=\left[\int_0^L h_sh_{yy}\,dy\right]_{s=0}^{s=\ell}
      -\int_0^\ell\int_0^L h_sh_{syy}\,dy\,ds\\
    &=-\int_0^\ell[h_sh_{sy}]_{y=0}^{y=L}\,ds
      +\int_0^\ell\int_0^L h_{sy}^2\,dy\,ds\\
    &=\int_0^\ell\int_0^L h_{sy}^2\,dy\,ds.
\end{align*}
Here the boundary term in $s$ vanishes because $h_s=0$ at both
ends, while the boundary term in $y$ cancels by periodicity.
Consequently,
\begin{equation}\label{eq:direct-flat-hessian-identity}
    \int_0^\ell\int_0^L(\Delta_0h)^2\,dy\,ds
    =\int_0^\ell\int_0^L
      \left(h_{ss}^2+2h_{sy}^2+h_{yy}^2\right)\,dy\,ds.
\end{equation}
For each fixed $y$, apply \eqref{eq:sharp-interval-poincare} to
$p(s)=h_s(s,y)$ and $p(s)=h_y(s,y)$, respectively. This gives
\[
    \int_0^\ell h_{ss}^2\,ds
    \geq\frac{\pi^2}{\ell^2}\int_0^\ell h_s^2\,ds,
    \qquad
    \int_0^\ell h_{sy}^2\,ds
    \geq\frac{\pi^2}{\ell^2}\int_0^\ell h_y^2\,ds.
\]
Integrating in $y$ and discarding nonnegative terms in
\eqref{eq:direct-flat-hessian-identity}, we obtain
\begin{equation}\label{eq:direct-derivative-poincare}
    \int_0^\ell\int_0^L(\Delta_0h)^2\,dy\,ds
    \geq\int_0^\ell\int_0^L(h_{ss}^2+h_{sy}^2)\,dy\,ds
    \geq\frac{\pi^2}{\ell^2}
      \int_0^\ell\int_0^L|\nabla_0h|^2\,dy\,ds.
\end{equation}
Using $\chi_1\leq2S$ in
\eqref{eq:direct-hamiltonian-variation}, followed by
\eqref{eq:direct-derivative-poincare}, we conclude that
\[
    \mathscr Q_\ell[h]
    \geq\left(\frac{\pi^2}{2S\ell^2}-4\right)
      \int_0^\ell\int_0^L|\nabla_0h|^2\,dy\,ds.
\]
This is nonnegative under \eqref{eq:direct-stable-width}.

\medskip
\emph{Instability.}
We prove the necessary condition
\[
    \Omega_{\ell,L}\text{ Hamiltonian stable}
    \quad\Longrightarrow\quad
    \ell\leq\frac{\pi}{4\sqrt{|ab|}}.
\]
Suppose that $\Omega_{\ell,L}$ is Hamiltonian stable. For an
arbitrary $\eta\in C_c^\infty(0,\ell)$, define
\[
    h(s,y):=-\int_s^\ell\eta(t)\,dt.
\]
Then $dh=\eta(s)\,ds$ has compact support in the interior, so $h$
is admissible. By stability and $\chi_1\geq4|ab|$,
\begin{equation*}
    0
    \leq\frac1L\mathscr Q_\ell[h]
      =\int_0^\ell\left(
        \frac{|\eta'|^2}{\chi_1(s)}-4|\eta|^2\right)\,ds
    \leq\frac1{4|ab|}\int_0^\ell|\eta'|^2\,ds
      -4\int_0^\ell|\eta|^2\,ds.
\end{equation*}
Thus
\[
    \int_0^\ell|\eta'|^2\,ds
    \geq16|ab|\int_0^\ell|\eta|^2\,ds
    \qquad
    \text{for every }\eta\in C_c^\infty(0,\ell).
\]
Taking the infimum of the corresponding Rayleigh quotients
and using \eqref{eq:interval-rayleigh}, we obtain
\[
    16|ab|
    \leq
    \inf_{\substack{\eta\in C_c^\infty(0,\ell)\\\eta\neq0}}
    \frac{\displaystyle\int_0^\ell|\eta'|^2\,ds}
         {\displaystyle\int_0^\ell|\eta|^2\,ds}
    =
    \frac{\pi^2}{\ell^2}.
\]
Hence Hamiltonian stability necessarily implies
\[
    \ell\leq\frac{\pi}{4\sqrt{|ab|}}.
\]
The contrapositive proves the instability assertion.
\end{proof}

The two explicit estimates do not determine the stability behavior in
the intermediate range
\begin{equation*}
    \frac{\pi}{2\sqrt{2S}}
    <\ell\leq
    \frac{\pi}{4\sqrt{|ab|}},
\end{equation*}
whenever this interval is nonempty. For fixed $x_1$ and
$L$, however, the domains satisfy
\[
    \Omega_{\ell_1,L}\subset\Omega_{\ell_2,L}
    \qquad\text{whenever }0<\ell_1<\ell_2.
\]
As shown in the following corollary, this nesting property implies that
the stable widths are separated from the unstable widths by a
well-defined geometric critical width.

\begin{Corollary}[Existence of a geometric critical width]
\label{cor:geometric-critical-width}
Fix the immersion, $x_1$ and $L$. There is a unique finite number
$\ell_H=\ell_H(x_1,L)>0$ such that
\begin{equation}\label{eq: Hamiltonian stable}
    \Omega_{\ell,L}\text{ is Hamiltonian stable}
    \quad\Longleftrightarrow\quad
    0<\ell\leq\ell_H.
\end{equation}
Moreover,
\begin{equation}\label{eq:geometric-critical-width-bounds}
    0<\frac{\pi}{2\sqrt{2S}}
    \leq\ell_H
    \leq\frac{\pi}{4\sqrt{|ab|}}<\infty.
\end{equation}
\end{Corollary}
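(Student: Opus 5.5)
The plan is to define $\ell_H$ as the supremum of the stable widths. I would show that the set of stable widths is an interval of the form $(0,\ell_H]$, and then read off the bounds \eqref{eq:geometric-critical-width-bounds} from Theorem~\ref{thm:direct-hamiltonian-widths}. Throughout, $x_1$ and $L$ are fixed, and $\mathcal S:=\{\ell>0:\Omega_{\ell,L}\text{ is Hamiltonian stable}\}$.

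\emph{Step 1: monotonicity.} Suppose $0<\ell_1<\ell_2$ and $\ell_2\in\mathcal S$. Let $h$ be admissible on $\Omega_{\ell_1,L}$, so $h$ is constant near $s=0$ and near $s=\ell_1$. Extend $h$ to $\Omega_{\ell_2,L}$ by the constant value it takes near $s=\ell_1$, for $s\in[\ell_1,\ell_2]$. The extension is smooth, and its differential has support $\operatorname{supp}(dh)\Subset\Omega_{\ell_1,L}^\circ\subset\Omega_{\ell_2,L}^\circ$, so it is admissible on $\Omega_{\ell_2,L}$. The integrand in \eqref{eq:direct-hamiltonian-variation} vanishes wherever $dh=0$, so $\mathscr Q_{\ell_2}$ of the extension equals $\mathscr Q_{\ell_1}[h]$. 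By stability of $\Omega_{\ell_2,L}$ this is nonnegative, hence $\ell_1\in\mathcal S$. Thus $\mathcal S$ is closed downward.

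\emph{Step 2: the supremum is finite and positive.} By Theorem~\ref{thm:direct-hamiltonian-widths}, $(0,\pi/(2\sqrt{2S})]\subset\mathcal S$, and $\mathcal S\cap(\pi/(4\sqrt{|ab|}),\infty)=\emptyset$. Therefore $\ell_H:=\sup\mathcal S$ satisfies the two-sided bound in \eqref{eq:geometric-critical-width-bounds}. By Step 1, $(0,\ell_H)\subset\mathcal S$ and no $\ell>\ell_H$ lies in $\mathcal S$.

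\emph{Step 3: the endpoint is included.} This is the only delicate point: it is where the ``fixed near the boundary'' class matters. Let $h$ be admissible on $\Omega_{\ell_H,L}$. Compactness of $\operatorname{supp}(dh)$ in the interior gives some $\delta>0$ with $\operatorname{supp}(dh)\subset[\delta,\ell_H-\delta]\times(\mathbb R/L\mathbb Z)$. Then the restriction of $h$ to $\Omega_{\ell_H-\delta/2,L}$ is admissible there. Its second variation equals $\mathscr Q_{\ell_H}[h]$, since the integrand vanishes on the discarded strip. Because $\ell_H-\delta/2<\ell_H$ lies in $\mathcal S$ by Step 2, we get $\mathscr Q_{\ell_H}[h]\ge0$, so $\ell_H\in\mathcal S$. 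This proves \eqref{eq: Hamiltonian stable}.

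Uniqueness is immediate: a number with property \eqref{eq: Hamiltonian stable} must equal $\max\mathcal S$.
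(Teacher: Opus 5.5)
Your proposal is correct and follows essentially the same route as the paper: you get downward monotonicity of the stable widths by extending an admissible potential by its boundary constant, and you define $\ell_H$ as the supremum of the stable widths, with the two-sided bound read off from Theorem~\ref{thm:direct-hamiltonian-widths}; you then show the endpoint is stable by restricting an admissible potential on $\Omega_{\ell_H,L}$ to a slightly narrower domain $\Omega_{\ell',L}$ with $\ell'<\ell_H$, on the complement of which the integrand vanishes. Your uniqueness remark, that any such number must equal $\max$ of the stable set, is if anything a slightly more precise version of the paper's one-line justification.
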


\begin{proof}
Let $0<\ell_1<\ell_2$, and let $h$ be an arbitrary admissible
potential on $\Omega_{\ell_1,L}$. Since $dh$ vanishes near the
right boundary, $h$ equals a constant $C$ in a neighbourhood of
$\{\ell_1\}\times(\mathbb R/L\mathbb Z)$. Define
\[
    \widetilde h(s,y):=
    \begin{cases}
        h(s,y), & 0\leq s\leq\ell_1,\\
        C, & \ell_1<s\leq\ell_2.
    \end{cases}
\]
Since $h$ is already constant near $s=\ell_1$, this extension is
smooth. Its differential has the same support as $dh$, viewed
inside the larger domain, so $\widetilde h$ is admissible on
$\Omega_{\ell_2,L}$. Its derivatives vanish on the added region,
and therefore
\[
    \mathscr Q_{\ell_2}[\widetilde h]
    =
    \mathscr Q_{\ell_1}[h].
\]
If $\Omega_{\ell_2,L}$ is Hamiltonian stable, then
$\mathscr Q_{\ell_2}[\widetilde h]\geq0$, and hence
$\mathscr Q_{\ell_1}[h]\geq0$. Since $h$ was an arbitrary admissible
potential on $\Omega_{\ell_1,L}$, the smaller domain is also stable.
Thus stability of a larger domain implies stability of every
smaller domain. The same extension preserves a negative second
variation, so a negative direction on a smaller domain persists
on every larger domain.

Define
\[
    \mathcal W
    :=
    \bigl\{
        \ell>0:
        \Omega_{\ell,L}\text{ is Hamiltonian stable}
    \bigr\},
    \qquad
    \ell_H:=\sup\mathcal W.
\]
Theorem~\ref{thm:direct-hamiltonian-widths}
shows that $\mathcal W$ is nonempty and bounded, and gives
\eqref{eq:geometric-critical-width-bounds}.
If $0<\ell<\ell_H$, the definition of the supremum ensures that
there exists $w\in\mathcal W$ with $\ell<w\leq\ell_H$.
Indeed, otherwise $\ell$ would be an upper bound for $\mathcal W$,
contradicting $\ell<\sup\mathcal W$.
Since $\Omega_{w,L}$ is stable, the preceding monotonicity implies
that $\Omega_{\ell,L}$ is also stable.
On the other hand, if $\ell>\ell_H$, then $\ell\notin\mathcal W$,
because $\ell_H$ is an upper bound for $\mathcal W$. Hence
$\Omega_{\ell,L}$ is Hamiltonian unstable.

Finally, we prove stability at the critical width $\ell_H$.
Let $h$ be any admissible potential on $\Omega_{\ell_H,L}$.
Since $dh$ vanishes in a neighbourhood of the right boundary,
there exists $\delta\in(0,\ell_H)$ such that $h$ is constant on
\[
    [\ell_H-\delta,\ell_H]\times(\mathbb R/L\mathbb Z).
\]
Choose $\ell'$ with
\[
    \ell_H-\delta<\ell'<\ell_H.
\]
Then the restriction
$h|_{\Omega_{\ell',L}}$ is still admissible on
$\Omega_{\ell',L}$. Moreover, the removed region
$[\ell',\ell_H]\times(\mathbb R/L\mathbb Z)$ contributes nothing
to the second variation, since $\nabla_0h=\Delta_0h=0$ there.
Hence
\[
    \mathscr Q_{\ell_H}[h]
    =
    \mathscr Q_{\ell'}
    [h|_{\Omega_{\ell',L}}]
    \geq0,
\]
where the inequality follows from the stability of
$\Omega_{\ell',L}$, since $\ell'<\ell_H$.
As $h$ was arbitrary, $\Omega_{\ell_H,L}$ is Hamiltonian stable.
Together with the preceding conclusions, this gives
\[
    \mathcal W=(0,\ell_H],
\]
and hence \eqref{eq: Hamiltonian stable} follows. The uniqueness of $\ell_H$ is immediate from its definition as $\sup\mathcal W$.
\end{proof}

\begin{Remark}
In the flat case $c=0$ and $|a|=|b|$, the critical width can be
determined explicitly. Indeed, we have $S=2a^2$, so the two explicit
bounds in \eqref{eq:geometric-critical-width-bounds} coincide.
Thus $\chi\equiv 4a^2$, and
\[
    \ell_H=\frac{\pi}{4|a|},
    \qquad
    \Omega_{\ell,L}\text{ is Hamiltonian stable}
    \quad\Longleftrightarrow\quad
    0<\ell\leq\frac{\pi}{4|a|}.
\]
In general, Corollary~\ref{cor:geometric-critical-width}
establishes the existence of the geometric critical width but does
not locate it more precisely within these two bounds. Although these bounds are independent of \(x_1\) and \(L\), the critical width \(\ell_H\) itself may still depend on \(x_1\) and \(L\).
\end{Remark}

{\bf AI Disclosure.}
During the preparation and revision of this work,
the author used ChatGPT for assistance with English-language editing and the 
development and refinement of parts of the mathematical
arguments. This included exploring proof strategies and examining
assumptions, boundary conditions, and intermediate
calculations.
The author critically reviewed and revised the AI-assisted
material and takes full responsibility for the mathematical
arguments, computations, and conclusions presented in the manuscript.
 
The author declares that there is no conflict of interest.

\bibliographystyle{plain}
\bibliography{mybib}

\end{document}